\documentclass[11pt]{article}

\usepackage{fullpage}
\usepackage[T1]{fontenc}
\usepackage{amsmath,amssymb}
\usepackage{amsopn,amsthm}

\usepackage{graphicx}
\usepackage{color}
\usepackage{psfrag}
\usepackage{epsfig}
\usepackage{epstopdf}
\usepackage{tikz}

\usepackage{mathrsfs}
\usepackage{bm}
\usepackage{cases}

\usepackage{algpseudocode}
\usepackage{algorithm}
\floatstyle{plain} \newfloat{myalgo}{tbhp}{mya}

{\begin{myalgo}[#1]
    \centering
    \begin{minipage}{0.9\textwidth}
      \begin{algorithm}[H]}%
      {\end{algorithm}
    \end{minipage}
  \end{myalgo}}

\usepackage[colorlinks=true]{hyperref}
\hypersetup{urlcolor=blue, citecolor=blue, linkcolor=blue}

\usepackage{enumitem}

\newtheorem{theorem}{Theorem}[section]

\newtheorem{lemma}[theorem]{Lemma}

\theoremstyle{definition}

\newtheorem{example}[theorem]{Example}

\theoremstyle{remark} \newtheorem{remark}[theorem]{Remark}

\numberwithin{equation}{section}
\numberwithin{figure}{section}
\numberwithin{algorithm}{section}

\newcommand{\field}[1]{{\mathbb{#1}}}
\newcommand{\C}{\field{C}}
\newcommand{\N}{\field{N}}
\newcommand{\R}{\field{R}}

\newcommand{\Acal}{\mathcal{A}}
\newcommand{\Bcal}{\mathcal{B}}

\newcommand{\Lcal}{\mathcal{L}}

\newcommand{\Ocal}{\mathcal{O}}

\newcommand{\Xcal}{\mathcal{X}}
\newcommand{\Ycal}{\mathcal{Y}}

\newcommand{\bs}{\boldsymbol}

\newcommand{\bfE}{{\bs E}}
\newcommand{\bfF}{{\bs F}}

\newcommand{\bfI}{{\bs I}}

\newcommand{\bfR}{{\bs R}}
\newcommand{\bfS}{{\bs S}}

\newcommand{\bfV}{{\bs V}}

\newcommand{\bfX}{{\bs X}}

\newcommand{\loc}{{\mathrm{loc}}}
\newcommand{\ol}[1]{\overline{#1}}

\newcommand{\subs}{\subseteq} 
\newcommand{\trans}{{\top}}

\newcommand{\ds}{\, \dif s}

\newcommand{\dx}{\, \dif x}
\newcommand{\dy}{\, \dif y}

\newcommand{\ahat}{\widehat a}

\newcommand{\hhat}{\widehat h}

\newcommand{\xhat}{\widehat{x}}

\newcommand{\dtilde}{{\widetilde d}}

\newcommand{\htilde}{{\widetilde h}}

\newcommand{\rmi}{\mathrm{i}} 

\newcommand{\rme}{\mathrm{e}}

\newcommand{\Sd}{{S^{d-1}}}
\newcommand{\Sone}{{S^1}}

\newcommand{\Rd}{{\R^d}}
\newcommand{\Rtwo}{{\R^2}}

\newcommand{\BR}{{B_R(0)}}

\newcommand{\ui}{u^i}

\newcommand{\uinfty}{u^\infty}

\newcommand{\uig}{u^i_g}
\newcommand{\uqgs}{u_{q,g}^s}
\newcommand{\uqg}{u_{q,g}}
\newcommand{\uqginfty}{u_{q,g}^\infty}

\newcommand{\LtSd}{{L^2(S^{d-1})}}

\newcommand{\di}{\partial}

\newcommand{\qmin}{q_\mathrm{min}}
\newcommand{\qmax}{q_\mathrm{max}}

\DeclareMathAlphabet{\mathbi}{\encodingdefault}{\rmdefault}{\bfdefault}{\itdefault}
\DeclareRobustCommand{\vec}[1]{\ifmmode\mathbi{#1}\else\textbf{\textit{#1}}\fi}

\DeclareMathOperator{\dif}{d\!}

\DeclareMathOperator{\supp}{supp}

\DeclareMathOperator{\sinc}{sinc}
\DeclareMathOperator{\trace}{trace}

\DeclareMathOperator{\real}{Re}

\DeclareMathOperator*{\essinf}{ess\,inf}

\begin{document}

\title{Monotonicity-based regularization of inverse medium scattering for shape reconstruction}
\author{Roland Griesmaier\footnote{Institut f\"ur Angewandte und Numerische
    Mathematik, Karlsruher Institut f\"ur Technologie, Karlsruhe,
    Germany ({\tt roland.griesmaier@kit.edu})}\,\! ,
  Bastian Harrach\footnote{Institute for Mathematics,
    Goethe-University Frankfurt, Frankfurt am Main, Germany
    ({\tt harrach@math.uni-frankfurt.de})}\,\! , and
  Jianli Xiang\footnote{Corresponding author. Three Gorges Mathematical Research Center,
    College of Mathematics and Physics, China Three Gorges University, Yichang 443002, China
    ({\tt xiangjianli@ctgu.edu.cn})}\,\! .
}
\date{\today}

\maketitle

\begin{abstract}
    We consider the scattering of time-harmonic plane waves by a compactly supported inhomogeneous scattering obstacle governed by the Helmholtz equation.
    Given far field observations of the scattered fields corresponding to plane wave incident fields for all possible incident and observation directions we study the inverse problem to recover the support of the scatterer.
    We propose a qualitative monotonicity-based regularization scheme which combines monotonicity-based shape reconstruction with one-step linearization to reconstruct a discrete approximation of the shape of the scatterer from noisy far field data.
    The purpose of the one-step linearization is to stabilize the monotonicity approach to shape reconstruction.
    We show that the monotonicity-based regularization scheme recovers the correct shape of the scatterer for noise-free data.
    Furthermore, we establish that the solution of the monotonicity-based regularization converges to the exact solution as the noise level tends to zero.
    We present numerical examples to illustrate our theoretical findings.
\end{abstract}

{\small\noindent
  Mathematics subject classifications (MSC2020): 35R30, (78A45)
  \\\noindent
  Keywords: inverse scattering, Helmholtz equation, shape reconstruction, monotonicity, regularization, global convergence
  \\\noindent
  Short title: Monotonicity based regularization
}

\section{Introduction}
\label{sec:Introduction}
The determination of a compactly supported scattering object given knowledge of far field observations of incident waves and scattered waves is a severely ill-posed nonlinear inverse problem that has received considerable attention in the last fifty years (see, e.g., \cite{ColKre19}).
In this work the wave motion will be described by a scalar Helmholtz equation.
We consider inhomogeneous scatterers, where the incident wave fields penetrate into the interior of the obstacle, and as usual we make the assumption that the scatterer is embedded in an unbounded uniform medium.
In this case, the reconstruction methods available in the literature can basically be divided into three different classes.
These are
(i)~methods based on linearized inverse scattering models such as the Born approximation
(see, e.g.,~\cite{AGKLS12,GuzBon06,KilMosSch12} or~\cite[Sec.~8.1]{Dev12}),
(ii)~iterative regularization schemes based on nonlinear optimization
(see, e.g.,~\cite{DorLes06,GutKli94,Het94,HohWei15,KleBer92}), and
(iii)~qualitative shape reconstruction schemes based on some sort of sampling strategy
(see, e.g.,~\cite{AudHad14,ColMon88,ColPotPia97,GriHar18,ItoJinZou12,Kir02}).
The first class of methods usually only works well at low frequencies or for small contrasts in the refractive index between the background medium and the scattering obstacle, when linearizations of the scattering problem give suitable approximations.
Iterative regularization algorithms require the solution of many direct scattering problems during the reconstruction, which is computationally expensive, and they often also suffer from possible local minima.
Qualitative sampling-type reconstruction methods usually come with a sound mathematical justification, and they are computationally efficient since no nonlinear optimization methods are required for their resolution.
They are only mildly dependent on the geometry and physical properties of the scatterer, but on the other hand most qualitative reconstruction methods are rather sensitive to noise in the given data.

The purpose of this paper is to initiate the study of reconstruction schemes for inverse medium scattering problems that combine the rigorous theoretical foundation of qualitative reconstruction methods with the favorable stability properties of Born-based inverse scattering or iterative regularization schemes.
A monotonicity-based regularization method for solving the inverse conductivity problem by combining the qualitative sampling-based monotonicity method from~\cite{HarUlr13,Tam06,TamRub02} with a one-step linearization (see, e.g., \cite{CINSG90}) has been proposed in~\cite{HarMin16,HarMin18}.
This technique has later been applied to an inverse boundary value problem in stationary elasticity in~\cite{EbeHar22}.
Recently, the first extension of this framework to an inverse boundary value problem for the Helmholtz equation has been established in~\cite{EbeHarWan25}.
This work builds on the monotonicity-based reconstruction method from \cite{HarPohSal19b,HarPohSal19a}.
The non-coercivity of the Helmholtz equation renders this generalization significantly non-trivial and requires a number of new ideas.

Following~\cite{EbeHarWan25}, we develop a constrained optimization problem for the sum of the eigenvalues of the linearized residual operator associated to the inverse scattering problem.
Here the admissible set of refractive indices is determined by a monotonicity criterion from~\cite{GriHar18}, which is based on a monotonicity relation between the index of refraction of the scatterer and the eigenvalues of the linearized residual operator.
We show that in the noise-free case the unique minimizer of this constrained minimization problem exactly determines the support of the scatterer.
This is mainly due to the fact that the minimizer coincides with the largest element in the admissible set, which is determined by the monotonicity test.
Therefore, the theoretical results for the monotonicity method from \cite{GriHar18} can be applied.
For noisy data we have to suitably modify the cost functional as well as the admissible set in the constrained optimization problem.
The admissible set is defined in terms of a relaxed monotonicity criterion (see, e.g., \cite{GarSta17,GarSta19} for contributions on the regularization of the monotonicity method for the inverse conductivity problem).
In the cost functional we add a scalar multiple of the norm of the linearized residual operator as a regularization term to the sum of the eigenvalues of the linearized residual operator.
This significantly improves the reconstruction in particular at low frequencies or for small contrasts in the refractive index, when linearizing of the forward problem is appropriate.
Our main theoretical result for noisy data states that the reconstructions of the support of the scatterer obtained by minimizing the regularized cost functional tend to the exact support of the scatterer as the noise level tends to zero.

Since the constrained optimization problem associated to the new monotonicity-based regularization scheme is equivalent to a convex semidefinite minimization problem, it can be implemented efficiently using standard software packages.
In our numerical examples we use CVX from~\cite{CVX1,CVX2}, and we compare the reconstructions obtained by this new approach to results obtained by the factorization method from~\cite{Kir02}.

The remainder of this article proceeds as follows.
In Section~\ref{sec:Setting} we briefly outline the theoretical background on the direct scattering problem and introduce the far field operator and its linearization.
Then, in Section~\ref{sec:MonotonicityRegularization}, we develop the monotonicity-based regularization method for the inverse scattering problem, and we analyze convergence and stability of this approach.
Numerical results are given in Section~\ref{sec:Numerics}, and we close with some conclusions.

\section{Scattering by an inhomogeneous obstacle}
\label{sec:Setting}
We consider scattering of time-harmonic scalar waves in $\Rd$, $d=2,3$, by an inhomogeneous obstacle supported in some bounded open subset $D\subs\Rd$.
Let $\uig$ be a Herglotz incident field with density $g\in\LtSd$ and wave number $k>0$, which is defined by
\begin{equation}
    \label{eq:Uig}
    \uig(x)
    \,:=\, \int_{\Sd} \rme^{\rmi k x\cdot \theta} g(\theta) \ds(\theta) \,, \qquad x\in\Rd \,.
\end{equation}
The scattering object is fully described by the refractive index $n^2=1+q\in L^\infty(\Rd)$, where ${q\in L^\infty(D)}$ denotes a contrast function.
Throughout, we identify functions in $L^\infty(D)$ with their extension by zero to all of $\Rd$ whenever appropriate.
We also assume that
\begin{equation*}
    0<\qmin\leq q\leq \qmax<\infty \qquad \text{a.e.\ in } D
\end{equation*}
for some constants $\qmin,\qmax>0$.
Then, the total field $\uqg\in H^1_\loc(\Rd)$ is the unique weak solution of the Helmholtz equation
\begin{equation*}
    \Delta\uqg + k^2 n^2 \uqg = 0 \qquad \text{in } \Rd
\end{equation*}
such that the associated scattered field, $\uqgs := \uqg-\uig$, satisfies the Sommerfeld radiation condition
\begin{equation}
    \label{eq:Sommerfeld}
    \lim_{r\to\infty} r^{\frac{d-1}{2}}
    \Bigl( \frac{\di\uqgs}{\di r}(x) - \rmi k \uqgs(x) \Bigr)
    \,=\, 0 \,, \qquad r=|x| \,,
\end{equation}
uniformly with respect to all directions $x/|x|\in\Sd$.
Accordingly, the scattered field has the asymptotic behavior of an outgoing spherical wave
\begin{equation*}
    \uqgs(x)
    \,=\, C_d \frac{\rme^{\rmi k |x|}}{|x|^{\frac{d-1}{2}}} \uqginfty(\xhat)
    + \Ocal\bigl(|x|^{-\frac{d+1}{2}}\bigr) \,, \qquad |x|\to\infty \,,
\end{equation*}
uniformly in $\xhat:=x/|x|\in\Sd$, where
\begin{equation*}
    C_d \,=\, \rme^{\rmi\pi/4} / \sqrt{8\pi k} \quad\text{if } d=2
    \qquad\text{and}\qquad
    C_d \,=\, 1 / (4\pi) \quad\text{if } d=3 \,.
\end{equation*}
The function
\begin{equation}
\label{eq:Farfield}
    \uqginfty(\xhat)
    \,=\, k^2 \int_{D} q(y) \uqg(y) \rme^{-\rmi k\xhat\cdot y} \dy \,, \qquad \xhat\in\Sd \,,
\end{equation}
is known as the far field pattern of $\uqgs$ (see, e.g., \cite[Chap.~7]{Kir21} for further details on the direct scattering problem).

Next, we define the far field operator
\begin{equation}
    \label{eq:FarfieldOperator}
    F[q]:\, \LtSd \to\LtSd \,, \quad F[q]g \,:=\, \uqginfty \,,
\end{equation}
which maps densities of superpositions of plane wave incident fields to the far field patterns of the associated scattered fields.
As such it is often used as an idealized mathematical model for remote sensing experiments.
We recall that the far field operator is compact and normal; see, e.g., \cite[Thm.~7.20]{Kir21}.
The inverse scattering problem we are concerned with is to recover the support $D$ of the scatterer from finitely many observations of $F[q]$.
Here, we assume that we are also given some sufficiently large region of interest $\Omega\subs\Rd$ that contains the scatterers in its interior.
We note that at least the size of a sufficiently large ball or cube containing all scatterers can immediately be estimated from the decay behavior of the Fourier or spherical harmonics coefficients of the given far field data; see, e.g., \cite{KusSyl03,Syl06}.

Denoting by $\Xcal\subs L^\infty(\Rd)$ the subspace of functions $q\in L^\infty(\Rd)$ with $\supp(q) \subs \Omega$ and $\essinf_{x\in\supp(q)} q(x) > 0$, we write $F: \Xcal \to \Lcal(\LtSd)$ for the nonlinear operator that maps a contrast function $q\in\Xcal$ to the associated far field operator $F[q]\in\Lcal(\LtSd)$.

\begin{lemma}
\label{lmm:FrechetDerivative}
   The nonlinear operator $F$ is Fr\'echet differentiable at zero, and for any compactly supported $h\in L^\infty(\Omega)$ its Fr\'echet derivative satisfies
   \begin{equation}
   \label{eq:FrechetDerivative}
   \int_{\Sd} g \ol{(F'[0] h)g} \ds
   \,=\, k^2 \int_\Omega h |\uig|^2 \dx
   \qquad \text{for all } g\in\LtSd \,.
   \end{equation}
\end{lemma}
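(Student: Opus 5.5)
Since $F[0]=0$, I will identify the candidate derivative explicitly and bound the remainder. I define the linear operator $F'[0]h$ by the Born approximation:
\begin{equation*}
  (F'[0]h)g(\xhat) \,:=\, k^2 \int_\Omega h(y)\, \uig(y)\, \rme^{-\rmi k\xhat\cdot y} \dy \,, \qquad \xhat\in\Sd \,.
\end{equation*}
This is the far field formula \eqref{eq:Farfield} with $\uqg$ replaced by the incident field. Since $\Omega$ is bounded and $|\uig|\le C\|g\|_\LtSd$ on $\Omega$, this is a bounded linear map $L^\infty(\Omega)\to\Lcal(\LtSd)$, with operator norm at most $C\|h\|_{L^\infty}$.

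For the Fréchet property, I write $\uqg=\uig+\uqgs$, where $\uqgs$ solves the Lippmann--Schwinger equation
\begin{equation*}
  \uqgs = k^2\Phi * (q\uqg) \quad \text{in } \Omega \,,
\end{equation*}
with $\Phi$ the radiating fundamental solution. The volume potential $V_q u := k^2\Phi*(qu)$ is bounded on $L^2(\Omega)$ with norm at most $C\|q\|_{L^\infty}$. Hence for $\|q\|_\infty$ small, a Neumann series gives
\begin{equation*}
  \|\uqgs\|_{L^2(\Omega)} \le C\|q\|_\infty \|\uig\|_{L^2(\Omega)} \le C'\|q\|_\infty\|g\|_\LtSd \,.
\end{equation*}
Using this smallness argument is the key point: it avoids needing uniqueness theory beyond what is already known.

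It follows that
\begin{equation*}
  \bigl(F[q]-F[0]-F'[0]q\bigr)g(\xhat) = k^2\int_\Omega q\, \uqgs\, \rme^{-\rmi k\xhat\cdot y}\dy \,,
\end{equation*}
whose $\LtSd$ norm is bounded by $C\|q\|_\infty\|\uqgs\|_{L^2(\Omega)}\le C''\|q\|_\infty^2\|g\|$. This shows that the remainder is $o(\|q\|_\infty)$ in operator norm. Strictly speaking, $\Xcal$ is not a linear space (the essinf condition), so I interpret differentiability with respect to the ambient $L^\infty(\Omega)$ norm, and the estimate above holds for all small $q$ in that space.

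The identity \eqref{eq:FrechetDerivative} is then a Fubini computation. I compute
\begin{align*}
  \int_\Sd g(\xhat)\,\ol{(F'[0]h)g(\xhat)}\ds(\xhat)
  &= k^2\int_\Omega \ol{h(y)}\,\ol{\uig(y)} \int_\Sd g(\xhat)\rme^{\rmi k\xhat\cdot y}\ds(\xhat)\dy \\
  &= k^2\int_\Omega \ol{h}\,|\uig|^2\dy \,,
\end{align*}
using the definition \eqref{eq:Uig} of $\uig$. For real $h$ this is exactly the claim, since $\ol h = h$, and real-valued contrasts are the setting here. The main obstacle is only the uniform $L^2$ bound on $\uqgs$, i.e.\ the small-norm Neumann series for the volume potential; the rest is routine.
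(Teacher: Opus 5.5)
Your proposal is correct and follows the same route as the paper: the Born approximation as the candidate derivative, an $L^2(\Omega)$ bound on the scattered field via the Lippmann--Schwinger equation showing the remainder is quadratic in $\|h\|_{L^\infty}$, and a Fubini computation using the definition of $\uig$ for the quadratic form. Your Neumann-series step makes explicit that the constant in $\|\uqgs\|_{L^2(\Omega)} \le C\|q\|_\infty\|g\|_{\LtSd}$ is uniform for small $q$, which is exactly what Fr\'echet differentiability needs and which the paper's appeal to well-posedness leaves implicit.
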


\begin{proof}
    For $h\in L^\infty(\Omega)$ compactly supported and $g\in \LtSd$ let
    \begin{equation*}
        \bigl((F'[0]h)g\bigr)(\xhat)
        \,:=\, k^2 \int_\Omega h(y) \uig(y) \rme^{-\rmi k\xhat\cdot y} \dy \,.
    \end{equation*}
    Using \eqref{eq:FarfieldOperator} and \eqref{eq:Farfield} we find that
    \begin{equation}
    \label{eq:ProofFrechetDerivative1}
    \begin{split}
        \| F[h]g - F[0]g - (F'[0]h)g \|_{\LtSd}^2
        &\,=\, \int_\Sd \biggl|
        k^2 \int_\Omega h(y) \bigl( u_{h,g}(y) - \uig(y) \bigr) \rme^{-\rmi k\xhat\cdot y} \dy
        \biggr|^2 \ds(\xhat) \,.
    \end{split}
    \end{equation}
    Since, $u^s_{h,g} := u_{h,g} - \uig \in H^1_\loc(\Rd)$ satisfies
    \begin{equation*}
        \Delta u^s_{h,g} + k^2(1+h) u^s_{h,g}
        \,=\, - k^2 h \uig \qquad \text{in } \Rd \,,
    \end{equation*}
    together with the Sommerfeld radiation condition \eqref{eq:Sommerfeld}, we obtain using the well-posedness of this scattering problem or, equivalently, the invertibility of the associated Lippmann-Schwinger integral equation (see, e.g., \cite[Thm.~7.13]{Kir21}) that
    \begin{equation*}
        \begin{split}
            \biggl| k^2 \int_\Omega h(y) u^s_{h,g}(y) \rme^{-\rmi k\xhat\cdot y} \dy \biggr|
            &\,\leq\, k^2 \|h\|_{L^\infty(\Omega)} |\Omega|^{1/2} \|u^s_{h,g}\|_{L^2(\Omega)}
            \,\leq\, k^2 \|h\|_{L^\infty(\Omega)} |\Omega|^{1/2} \|k^2 h\uig\|_{L^2(\Omega)} \\
            &\,\leq\, k^4 |\Sd|^{1/2} |\Omega| \|h\|_{L^\infty(\Omega)}^2 \|g\|_{\LtSd} \,.
        \end{split}
    \end{equation*}
    Substituting this estimate into \eqref{eq:ProofFrechetDerivative1} shows that the operator
    $F'[0]: L^\infty(\Omega) \to \Lcal(\LtSd)$ defined by \eqref{eq:ProofFrechetDerivative1} is the Fr\'echet derivative of the nonlinear operator $F$ at zero.
    The quadratic representation \eqref{eq:FrechetDerivative} then follows from~\eqref{eq:Uig}.
\end{proof}

We note that $F'[0]h$ is just the Born approximation of $F_h$; see, e.g., \cite{Kir17}.
It can immediately be seen that $F'[0]h$ is compact and self-adjoint.

\section{Monotonicity-based regularization of one-step linearization}
\label{sec:MonotonicityRegularization}
In this section we develop the mathematical framework of the novel monotonicity-based regularization for the inverse scattering problem, which recovers an approximation of the support $D$ of the scatterer from finitely many observations of the far field patterns of scattered fields corresponding to finitely many Herglotz incident fields.
The method combines the sound theoretical foundation of the monotonicity method with the numerical stability of a one-step linearization.

Denoting by $\{g_1,g_2,\ldots,g_N\}\subs\LtSd$ an orthonormal system in $\LtSd$, we assume that we have access to finitely many observations
\begin{equation}
\label{eq:DefFNq}
    \bfF_N[q]
    \,:=\, \Bigl(
    \int_\Sd g_j \ol{F[q] g_l} \ds
    \Bigr)_{j,l=1}^N
    \,\in\, \C^{N\times N} \,.
\end{equation}
Since the far field operator $F[q]$ is normal, the matrix $\bfF_N[q]$ is normal as well.
From Lemma~\ref{lmm:FrechetDerivative} we obtain that the Fr\'echet derivative $\bfF_N'[0]$ of $\bfF_N$ in zero is given by
\begin{equation}
\label{eq:DefFN'0}
    \bfF_N'[0] h
    \,=\, \Bigl(
    k^2 \int_\Rd h \ui_{g_j} \ol{\ui_{g_l}} \dx
    \Bigr)_{j,l=1}^N
    \,\in\, \C^{N\times N}
\end{equation}
for all compactly supported $h\in L^\infty(\Omega)$.
Since $h$ is real-valued, we see that the matrix $\bfF_N'[0] h$ is self-adjoint.

Our goal is to approximate the support of the scatterer $D$ from the finite set of observations described by $\bfF_N[q]$.
To solve this inverse problem, we discretize the region of interest $\ol{\Omega} = \bigcup_{m=1}^M \ol{P_m}$ into $M$ disjoint open pixels $P_m\subs\Omega$ and approximate the unknown contrast function $q$ by a piecewise constant function $h = \sum_{m=1}^M a_m \chi_{P_m}$ with unknown coefficients $a_1,\ldots a_M\in [0,\infty)$.
As usual $\chi_{P_m}$ denotes the indicator function for $P_m$.
We write
\begin{equation*}
    \Ycal
    \,:\,= \Bigl\{ h = \sum_{m=1}^M a_m \chi_{P_m} \;\Big|\;
    a_1,\ldots a_M\in [0,\infty) \Bigr\} \subs L^\infty(\Omega) \,,
\end{equation*}
and define the linearized residual ${\bfR: \Ycal \to \C^{N\times N}}$ by
\begin{equation*}
    \bfR (h)
    \,:=\, \real \bigl( \bfF_N[q] - \bfF_N'[0]h \bigr) \,.
\end{equation*}
Let $\lambda_1(\bfR(h)) \geq \lambda_2(\bfR(h)) \geq \cdots \geq \lambda_N(\bfR(h))$ denote the eigenvalues of this self-adjoint matrix in descending order.
To recover the support of a piecewise constant function $h$, which approximates the support $D$ of the scatterer, we consider the constrained minimization problem
\begin{subequations}
\label{eq:MinimizationProblem}
\begin{equation}
    \min_{h\in\Acal} \Bigl(
    \sum_{\{ j \;|\; \lambda_j > 0\}} \lambda_j(\bfR(h)) \Bigr) \,,
\end{equation}
where the admissible set $\Acal$ is given by
\begin{equation}
    \Acal
    \,:=\, \Bigl\{
    h = \sum_{m=1}^M a_m \chi_{P_m} \;\Big|\;
    0 \leq a_m \leq \min(\qmin,\beta_m)
    \Bigr\} \,.
\end{equation}
\end{subequations}
Here, the linear constraint $\beta_m$ is defined by a monotonicity test in terms of the infinite dimensional far field operator $F[q]$ and its Fr\'echet derivative $F'[0]$ with respect to $q$ at zero as follows:\footnote{Given any two compact self-adjoint operators $A$ and $B$ on a Hilbert space $X$, we write $A\geq_r B$ for some $r\in\N$ when $A-B$ has at most $r$ negative eigenvalues.}
\begin{equation}
\label{eq:Defbetam}
    \beta_m
    \,:=\,
    \max \bigl\{ \beta\geq 0 \;\big|\;
    \real ( F[q] - \beta F'[0]\chi_{P_m} ) \geq_{d_m} 0 \bigr\} \,,
\end{equation}
where $d_m = d(\qmin,P_m)$ is the number of negative eigenvalues of $\real (F[q]-\qmin F'[0]\chi_{P_m})$ in case this number is finite;
if $d(\qmin,P_m)$ is not finite, we set $\beta_m=0$.

The following lemma clarifies the connection between the linear constraint $\beta_m$ and the support of the scatterer $D$.
\begin{lemma}
\label{lmm:Pmbetam}
    Let $1\leq m\leq M$ and let $\beta_m\geq0$ be defined by \eqref{eq:Defbetam}.
    Then $P_m \subs D$ if and only if $\beta_m>0$.
    Moreover, in this case $\beta_m \geq \qmin$.
\end{lemma}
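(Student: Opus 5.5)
We will derive the statement from the monotonicity relations and the characterization theorem of \cite{GriHar18}. We take two facts from there, for contrasts $q_1,q_2\in L^\infty$ supported in $\Omega$. First, if $q_1\leq q_2$ a.e., then $\real F[q_2]\geq_r \real F[q_1]$ for some finite $r$. Second, the linearized variant
\begin{equation*}
\real F[q] \geq_{r} \real\bigl(F'[0] h\bigr) \qquad\text{whenever } 0\le h\le q,
\end{equation*}
holds with a finite $r$ determined by $q$ and $h$. This second fact is the standard estimate based on the quadratic representation \eqref{eq:FrechetDerivative} together with a finite-codimensional subspace argument. Conversely, \cite{GriHar18} shows via localized wave functions that if $P\not\subs D$ and $\beta>0$, then $\real(F[q]-\beta F'[0]\chi_P)$ has infinitely many negative eigenvalues.

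\emph{Case $P_m\subs D$.} Then $\qmin\chi_{P_m}\le q$ a.e. By the linearized monotonicity, $d_m=d(\qmin,P_m)$ is finite. By the definition of $d_m$, the value $\beta=\qmin$ satisfies the constraint $\real(F[q]-\beta F'[0]\chi_{P_m})\ge_{d_m}0$, so the set in \eqref{eq:Defbetam} contains $\qmin$. Hence $\beta_m\ge\qmin>0$. We still need the maximum to exist, i.e.\ the set must be bounded and closed. For boundedness, note that $\real F'[0]\chi_{P_m}$ is positive semidefinite with infinite rank, since $\int_{P_m}|\uig|^2>0$ for $g\neq0$ by analyticity of $\uig$. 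Taking $d_m+1$ orthonormal functions with $\int_{P_m}|\uig|^2\dx$ bounded below, the quadratic form becomes negative on their span for large $\beta$. This gives $d_m+1$ negative eigenvalues, so the set is bounded. For closedness, the number of negative eigenvalues is lower semicontinuous in $\beta$ under norm convergence of compact self-adjoint operators, so the set $\{\beta \mid \text{at most } d_m \text{ negative eigenvalues}\}$ is closed.

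\emph{Case $P_m\not\subs D$.} If $d(\qmin,P_m)$ is infinite, then $\beta_m=0$ by convention. Otherwise we apply the converse result of \cite{GriHar18}. Since $P_m$ is open and $P_m\not\subs D$, and $D$ can be taken as the closure-regular support, $P_m\setminus\ol D$ contains a ball $B$. Using localized wave functions we construct an infinite-dimensional sequence of densities $g$ such that $\int_{B}|\uig|^2$ is arbitrarily large while $\|\uig\|_{L^2(D)}$ stays bounded. The latter bound controls $|\langle \real F[q]g,g\rangle|\le C\|\uig\|^2_{L^2(D)}$, using the factorization $F[q]=$ Herglotz$^*\,T\,$Herglotz. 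Consequently, for every $\beta>0$ the quadratic form of $\real(F[q]-\beta F'[0]\chi_{P_m})$ is negative on an infinite-dimensional subspace, so no $\beta>0$ satisfies the $\ge_{d_m}$ constraint, and $\beta_m=0$.

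The main obstacle is this last step, producing the infinite-dimensional subspace of localized waves modulo finite-dimensional exceptions. We will invoke it directly from \cite{GriHar18} (the runge-approximation/localized potential lemma) rather than reprove it. The only care needed is handling the finitely many exceptional eigenvalues coming from the non-coercivity of the Helmholtz equation. We will do this by restricting to the orthogonal complement of a finite-dimensional subspace on which the interior transmission part of $F[q]$ fails to be sign-definite.
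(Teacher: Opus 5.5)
Your proposal is correct and follows the same route as the paper. The inside case rests on the linearized monotonicity relation \cite[Thm.~5.1(a)]{GriHar18}, which gives $d_m<\infty$ and admissibility of $\beta=\qmin$ (immediate from the definition of $d_m$, as you note); the outside case rests on the localized-wave-function result \cite[Thm.~5.1(b)]{GriHar18}, which excludes every $\beta>0$. Your check that the maximum in \eqref{eq:Defbetam} is attained is a valid detail the paper leaves implicit. However, the regularity you use in the outside case ($P_m\setminus\ol D$ contains a ball lying in the unbounded component of $\Rd\setminus\ol D$) is a standing hypothesis inherited from \cite{GriHar18}, not something you can arrange without loss of generality.
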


\begin{proof}
    We first show that $P_m\subs D$ implies $\beta_m \geq \qmin > 0$.
    To see this, let $P_m\subs D$ and choose $\beta\in [0,\qmin]$.
    Then, by the monotonicity relation \cite[Thm~5.1(a)]{GriHar18} and the non-negativity of the self-adjoint operator $F'[0]\chi_{P_m}$, there exists a finite dimensional subspace $V\subs\LtSd$ such that
    \begin{equation*}
        \begin{split}
            \real \Bigl(
            \int_\Sd& g \, \ol{(F[q]-\beta F'[0]\chi_{P_m}) g} \ds
            \Bigr) \\
            &\,=\, \real \Bigl(
            \int_\Sd g \, \ol{(F[q]-\qmin F'[0]\chi_{P_m}) g} \ds
            \Bigr)
            + \int_\Sd g \, \ol{(\qmin-\beta) (F'[0]\chi_{P_m}) g} \ds \\
            &\,\geq\, 0
        \end{split}
    \end{equation*}
    for all $g\in V^\perp$.
    In particular $d_m = d(\qmin,P_m) < \infty$ in the definition \eqref{eq:Defbetam} of $\beta_m$, and we can choose the subspace $V$ such that $\dim(V) \leq d_m = d(\qmin,P_m)$.
    Since $\beta\in[0,\qmin]$ was arbitrary, we obtain that $\beta_m\geq\qmin>0$.

    The fact that $P_m\not\subs D$ implies that $\beta=0$ follows immediately from the monotonicity relation~\cite[Thm.~5.1(b)]{GriHar18}, which says that in this case there exists no finite dimensional subspace $V\subs\LtSd$ such that~$\real ( F[q] - \beta F'[0]\chi_{P_m} ) \geq 0$ on $V^\perp$.
\end{proof}

\subsection{Convergence analysis for exact data}
\label{subsec:Convergence}

We show in Theorem~\ref{thm:MinimizerExactData} below that the constrained minimization problem \eqref{eq:MinimizationProblem} has a unique solution $\hhat\in\Acal$, and that $\supp(\hhat)$ approximates the support of the scatterer $D$.
For the proof of this result we require the following auxiliary lemma.

\begin{lemma}
\label{lmm:bfS_mPosDef}
    For all $m=1,\ldots,M$ the matrix $\bfF_N'[0]\chi_{P_m} \in \C^{N\times N}$ is self-adjoint and positive definite.
\end{lemma}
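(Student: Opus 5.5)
Self-adjointness follows from the entry formula \eqref{eq:DefFN'0}. With $h=\chi_{P_m}$ real-valued, the $(l,j)$ entry is
\[
k^2\int_{P_m}\ui_{g_l}\ol{\ui_{g_j}}\dx ,
\]
which is the complex conjugate of the $(j,l)$ entry. Hence $\bfF_N'[0]\chi_{P_m}$ is Hermitian.

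For positive definiteness I will compute the quadratic form directly. Take $\vec{c}=(c_1,\ldots,c_N)^\trans\in\C^N$ and put $g:=\sum_{j=1}^N \ol{c_j}\, g_j$, or with $c_j$ instead of $\ol{c_j}$, depending on the index convention. Since $g\mapsto \uig$ is linear by \eqref{eq:Uig}, the form $\vec{c}^{\,*}(\bfF_N'[0]\chi_{P_m})\vec{c}$ or its conjugate equals
\[
k^2\int_{P_m}\Bigl|\sum_j c_j\ui_{g_j}\Bigr|^2\dx \;=\; k^2\int_{P_m}|\uig|^2\dx \;\ge\; 0 .
\]
Equivalently, this is Lemma~\ref{lmm:FrechetDerivative} with $h=\chi_{P_m}$. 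It gives positive semidefiniteness immediately.

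For strict positivity, suppose the form vanishes. Then $\uig=0$ a.e.\ on the open set $P_m$, which is nonempty as a pixel. A Herglotz wave function is an entire solution of $\Delta u+k^2u=0$, hence real-analytic on $\Rd$. By unique continuation, $\uig\equiv 0$ in $\Rd$. The Herglotz operator $g\mapsto\uig$ is injective on $\LtSd$. One way to see this: expanding $g$ in spherical harmonics, the Jacobi--Anger expansion shows that the coefficients of $\uig$ are nonzero multiples of $j_n(k|x|)$ (or $J_n$ in 2D) times the coefficients of $g$, and these Bessel functions do not vanish identically. Alternatively, one can cite the standard result (e.g.\ \cite{Kir21,ColKre19}) that the Herglotz operator is injective. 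Therefore $g=0$, and since $\{g_j\}$ is orthonormal, hence linearly independent, $\vec{c}=0$.

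The only genuine ingredient is the injectivity of the Herglotz operator on $\LtSd$. I will cite it rather than reprove it. The rest is bookkeeping with the conjugation convention in \eqref{eq:DefFNq}.
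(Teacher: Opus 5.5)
Your proof is correct and takes essentially the same route as the paper. Both write the quadratic form as $k^2\int_{P_m}|\uig|^2\dx$ with $g$ a linear combination of the $g_j$, use that a Herglotz wave function vanishing on the open pixel forces $g=0$, and conclude $\vec{c}=0$ from orthonormality. The only difference is that you unpack the injectivity fact (analyticity and unique continuation, then Jacobi--Anger), where the paper simply cites \cite[Thm.~3.27]{ColKre19}. Incidentally, your first choice $g=\sum_j\ol{c_j}\,g_j$ is the one consistent with the conjugation convention in \eqref{eq:DefFN'0}.
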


\begin{proof}
    Let $v = (v_1,v_2,\ldots,v_N)^\trans \in \C^N$, and let $\{g_1,g_2,\ldots,g_N\}\subs\LtSd$ be the orthonormal system introduced at the beginning of Section~\ref{sec:MonotonicityRegularization}.
    Using \eqref{eq:DefFN'0} we find that
    \begin{equation*}
        v^* \bigl( \bfF_N'[0]\chi_{P_m} \bigr) v
        \,=\, k^2 \sum_{j,l=1}^N \ol{v_j} v_l \int_\Rd \chi_{P_m} \ui_{g_j} \, \ol{\ui_{g_l}} \dx
        \,=\, k^2  \int_{P_m} |\uig|^2 \dx
        \,\geq\, 0
    \end{equation*}
    with $g := \sum_{j=1}^N v_jg_j$.
    This shows that $\bfF_N'[0]\chi_{P_m} \in \C^{N\times N}$ is self-adjoint and positive semidefinite.

    To see that the matrix is in fact positive definite, we recall from \cite[Thm.~3.27]{ColKre19} that $\uig$ can only vanish on the open subset $P_m\subs\Rd$ when $g=0$.
    However, the latter is equivalent to $v=0$.
\end{proof}

Now we are ready to discuss the solvability of \eqref{eq:MinimizationProblem}.
\begin{theorem}
\label{thm:MinimizerExactData}
    The minimization problem~\eqref{eq:MinimizationProblem} has the unique minimizer
    \begin{equation}
    \label{eq:MinimizerExactData}
        \hhat
        \,=\, \sum_{m=1}^M \min\{\qmin,\beta_m\} \chi_{P_m} \,.
    \end{equation}
    In particular, we have for $m=1,\ldots,M$ that $P_m\subs \supp(\hhat)$ if and only if $P_m\subs D$.
\end{theorem}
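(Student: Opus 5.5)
The proof follows the strategy of \cite{EbeHarWan25,HarMin16}. We show that $\hhat$ is the largest element of $\Acal$, that $\bfR(\hhat)$ is positive semidefinite, and that the cost functional is strictly decreasing along the positive cone directions. The support statement then follows directly from Lemma~\ref{lmm:Pmbetam}.

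\emph{Step 1 (feasibility and maximality).} By definition every $h=\sum_m a_m\chi_{P_m}\in\Acal$ satisfies $a_m\le \min\{\qmin,\beta_m\}=:\ahat_m$. Hence $\hhat\in\Acal$ and $\hhat-h=\sum_m(\ahat_m-a_m)\chi_{P_m}$ has nonnegative coefficients. By Lemma~\ref{lmm:bfS_mPosDef} and linearity of $\bfF_N'[0]$, the difference $\bfR(h)-\bfR(\hhat)=\bfF_N'[0](\hhat-h)$ is positive semidefinite. It is positive definite whenever $h\neq\hhat$, because then some coefficient difference is strictly positive.

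\emph{Step 2 (key claim: $\bfR(\hhat)\ge 0$).} For $\hhat$ the cost equals $\trace$ of the positive part of $\bfR(\hhat)$. I would show that $\real(\bfF_N[q]-\bfF_N'[0]\hhat)$ is positive semidefinite, so that its cost is $\trace\bigl(\bfR(\hhat)\bigr)$ minus nothing. This is the main obstacle: the $\beta_m$ control the infinite-dimensional operator only up to finitely many negative eigenvalues, and only for a single pixel at a time.

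The route I would try first uses that $\hhat$ is supported in $D$ (by Lemma~\ref{lmm:Pmbetam}, $\ahat_m>0$ only when $P_m\subs D$) and satisfies $\hhat\le\qmin\chi_D$. By monotonicity, $\real(F[q]-F'[0]\hhat)\ge \real(F[q]-\qmin F'[0]\chi_D)$ holds in the quadratic-form sense. The latter is controlled via \cite[Thm.~5.1(a)]{GriHar18}. If the available result only yields positivity up to finitely many eigenvalues, I would instead avoid needing $\bfR(\hhat)\ge0$. The fallback is to prove minimality through the eigenvalue-sum functional directly. Write $\phi(A)=\sum_{\lambda_j(A)>0}\lambda_j(A)=\max_{0\le P\le I}\trace(PA)$. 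Then $\phi$ is monotone in the Loewner order, and strictly monotone wherever $A$ has a positive eigenvalue. Combined with Step 1, this gives $\phi(\bfR(h))\ge\phi(\bfR(\hhat))$, with strictness for $h\ne\hhat$ provided $\bfR(\hhat)$ is not negative definite. I would also check the degenerate case in which all eigenvalues of $\bfR(h)$ are nonpositive. Uniqueness then needs the positivity from the first route, or a separate argument that $\bfR(\hhat)$ has a nonnegative eigenvalue structure making $\phi$ strictly increasing.

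\emph{Step 3 (conclusion).} With $\bfR(\hhat)\ge0$, for $h\ne\hhat$ the matrix $\bfR(h)\ge\bfR(\hhat)+\bfF_N'[0](\hhat-h)$ is positive definite. Therefore
\[
\phi(\bfR(h))=\trace\bfR(h)=\trace\bfR(\hhat)+\trace\bigl(\bfF_N'[0](\hhat-h)\bigr)>\phi(\bfR(\hhat)).
\]
This proves existence and uniqueness of the minimizer. Finally, $P_m\subs\supp(\hhat)$ if and only if $\min\{\qmin,\beta_m\}>0$. Since $\qmin>0$, this is equivalent to $\beta_m>0$, which by Lemma~\ref{lmm:Pmbetam} is equivalent to $P_m\subs D$.
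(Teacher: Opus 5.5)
You have a genuine gap, and it sits in Steps~2--3. Step~1 is correct and is exactly how the paper starts. Steps~2--3, however, rest on the claim $\bfR(\hhat)\geq 0$, which does not follow from the available results and is false in general.

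Your comparison $\real(F[q]-F'[0]\hhat)\geq\real(F[q]-\qmin F'[0]\chi_D)$ is correct. But \cite[Thm.~5.1(a)]{GriHar18} (as used in Lemma~\ref{lmm:Pmbetam}) only makes the right-hand side nonnegative on $V^\perp$ for some finite-dimensional $V$, say $\dim V\leq d'$. This is exactly why the counts $d_m$ appear in \eqref{eq:Defbetam}. The compression to $\mathrm{span}\{g_1,\ldots,g_N\}$ can keep the negative directions. For instance, if $g_1$ is an eigenfunction of $\real(F[q]-F'[0]\hhat)$ for a negative eigenvalue, then the $(1,1)$ entry of $\bfR(\hhat)$ is negative. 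So the identity $\phi(\bfR(h))=\trace\bfR(h)$ in Step~3 is not available.

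The paper never needs positivity of $\bfR(\hhat)$. It writes $\bfR(h)=\bfR(\hhat)+\sum_m\tau_m\bfF_N'[0]\chi_{P_m}$, where the increment is positive definite by Lemma~\ref{lmm:bfS_mPosDef}. Weyl's inequality then gives $\lambda_j(\bfR(h))>\lambda_j(\bfR(\hhat))$ for all $j$. That is your fallback, and it already shows that $\hhat$ is a minimizer.

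For uniqueness your caveat is well founded, but your proposal leaves it open. The eigenvalue-wise strict inequality yields $\sum_j\max\{\lambda_j(\bfR(h)),0\}>\sum_j\max\{\lambda_j(\bfR(\hhat)),0\}$ only when $\lambda_1(\bfR(h))>0$. The paper's proof goes from the eigenvalue-wise inequality straight to the strict inequality of the sums, so it tacitly assumes this too. The point is not vacuous. If $\bfR(\hhat)$ were negative definite and $\hhat\neq0$, then $\bfR(h)$ would stay negative definite for all $h\in\Acal$ near $\hhat$. The cost would vanish there, and uniqueness would fail.

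Your first route supplies the missing ingredient in finite form. Since $\real(F[q]-F'[0]\hhat)$ is nonnegative on $V^\perp$ with $\dim V\leq d'$, every $(d'+1)$-dimensional subspace of $\mathrm{span}\{g_1,\ldots,g_N\}$ meets $V^\perp$ nontrivially. By min--max, $\bfR(\hhat)$ therefore has at most $d'$ negative eigenvalues. Hence for $N>d'$ you get $\lambda_1(\bfR(\hhat))\geq 0$, so $\lambda_1(\bfR(h))>0$ for every $h\in\Acal\setminus\{\hhat\}$, and strictness follows. With this condition, or with $\lambda_1(\bfR(\hhat))\geq 0$ as an explicit hypothesis, your fallback plus the support argument via Lemma~\ref{lmm:Pmbetam} gives a complete proof.
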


\begin{proof}
    Let $h = \sum_{m=1}^M a_m \chi_{P_m}$ be an arbitrary element of the admissible set $\Acal$ and let $\hhat$ be defined by~\eqref{eq:MinimizerExactData}.
    Then, the definition of $\Acal$ in~\eqref{eq:MinimizationProblem} immediately implies that~$h \leq \hhat$.
    Suppose that $\hhat-h\neq0$.
    Then
    \begin{equation*}
        \tau
        \,:=\, \hhat - h
        \,=\, \sum_{m=1}^M \tau_m \chi_{P_m}
        \qquad \text{with } \tau_m := \min\{\qmin,\beta_m\} - a_m \geq 0 \,,
    \end{equation*}
    and $\tau_m>0$ for at least one $m\in\{1,\ldots,M\}$.
    Let
    \begin{equation*}
        \lambda_1(\bfR(h))
        \,\geq\, \lambda_2(\bfR(h))
        \,\geq\, \cdots \,\geq\, \lambda_N(\bfR(h))
    \end{equation*}
    be the $N$ eigenvalues of $\bfR(h)$, and denote by
    \begin{equation*}
        \lambda_1(\bfR(\hhat))
        \,\geq\, \lambda_2(\bfR(\hhat))
        \,\geq\, \cdots \,\geq\, \lambda_N(\bfR(\hhat))
    \end{equation*}
    the $N$ eigenvalues of $\bfR(\hhat)$.
    Since
    \begin{equation*}
        \bfR(h)
        \,=\, \bfR(\hhat) + \sum_{m=1}^M \tau_m \bfF_N'[0]\chi_{P_m} \,,
    \end{equation*}
    the positive definiteness of the self-adjoint matrices $\bfF_N'[0]\chi_{P_m}$ for all $m\in\{1,\ldots,M\}$ established in Lemma~\ref{lmm:bfS_mPosDef} together with Weyl's inequality (see, e.g., \cite[p.~62]{Bha97}) show that
    \begin{equation*}
        \lambda_j(\bfR(h))
        \,>\, \lambda_j(\bfR(\hhat))
        \qquad \text{for } j=1,\ldots, N \,.
    \end{equation*}
    Accordingly,
    \begin{equation*}
        \sum_{\{ j \;|\; \lambda_j > 0\}} \lambda_j(\bfR(h))
        \,>\, \sum_{\{ j \;|\; \lambda_j > 0\}} \lambda_j(\bfR(\hhat)) \,,
    \end{equation*}
    which confirms that $\hhat$ is the unique minimizer of the minimization problem \eqref{eq:MinimizationProblem}.

    Finally, it is clear from \eqref{eq:MinimizerExactData} that $P_m\subs\supp\hhat$ if an only if $\beta_m>0$.
    Lemma~\ref{lmm:Pmbetam} says that this is the case if and only if $P_m\subs D$.
\end{proof}

Theorem~\ref{thm:MinimizerExactData} says that the unique minimizer of \eqref{eq:MinimizationProblem} coincides with the largest element in the admissible set $\Acal$.
Accordingly, it contains the same information about the support of the scatterer~$D$ as the linear constraints $\beta_m$, $m=1,\ldots,M$, from \eqref{eq:Defbetam}.
Lemma~\ref{lmm:Pmbetam} confirms that this information determines the support of the scatterer $D$ up with pixel accuracy.
However, the evaluation of~$\min(\qmin,\beta_m)$, $m=1,\ldots,M$, is relatively sensitive to noise in the observed far field data, and the corresponding reconstructions deteriorate with increasing noise level.
In the next section we couple the monotonicity-based approach \eqref{eq:MinimizationProblem} with a simple one-step linearization, and we show that when the coupling parameter is chosen appropriately, depending on the noise level, then the solution of this regularized problem converges to the solution of \eqref{eq:MinimizationProblem} as the noise level tends to zero.
This means, we use one-step linearization to stabilize the monotonicity-based regularization in the presence of noise.

\subsection{Regularization and stability analysis for noisy data}
\label{subsec:Stability}

Suppose $F^\delta[q] \in \Lcal(\LtSd)$ is a noisy version of the far field operator $F[q]$ satisfying
\begin{equation}
\label{eq:EstNoiseLevel}
    \|F^\delta[q] - F[q] \|_{\Lcal(\LtSd)}
    \,\leq\, \delta
\end{equation}
with some noise level $\delta > 0$.
Accordingly, let
\begin{equation}
\label{eq:FarfieldObservations_bfFNdelta}
    \bfF^\delta_N[q]
    \,:=\, \Bigl(
    \int_\Sd g_j \, \ol{F^\delta[q] g_l} \ds
    \Bigr)_{j,l=1}^N
    \,\in\, \C^{N\times N} \,,
\end{equation}
where $\{g_1,g_2,\ldots,g_N\}\subs\LtSd$ is the same orthonormal system as at the beginning of Section~\ref{sec:MonotonicityRegularization}.
Given such noisy far field observations we define the linearized residual ${\bfR^\delta: \Ycal \to \C^{N\times N}}$ by
\begin{equation}
    \label{eq:DefRdelta}
    \bfR^\delta (h)
    \,:=\, \real \bigl( \bfF_N^\delta[q] - \bfF_N'[0]h \bigr) \,.
\end{equation}
Let $\lambda_1(\bfR^\delta(h)) \geq \lambda_2(\bfR^\delta(h)) \geq \cdots \geq \lambda_N(\bfR^\delta(h))$ denote the eigenvalues of this self-adjoint matrix in descending order.
Then we replace the constrained minimization problem \eqref{eq:MinimizationProblem} by
\begin{subequations}
\label{eq:MinimizationProblemNoisy}
\begin{equation}
    \min_{h\in\Acal^\delta} \Bigl(
    \sum_{\{ j \;|\; \lambda_j > 0\}} \lambda_j(\bfR^\delta(h))
    + \alpha(\delta) \|\bfR^\delta(h)\|_F  \Bigr) \,,
\end{equation}
where $\alpha(\delta)>0$ is a regularization parameter, and the admissible set $\Acal^\delta$ is given by
\begin{equation}
    \label{eq:DefAdelta}
    \Acal^\delta
    \,:=\, \Bigl\{
    h = \sum_{m=1}^M a_m \chi_{P_m} \;\Big|\;
    0 \leq a_m \leq \min(\qmin,\beta_m^\delta)
    \Bigr\} \,.
\end{equation}
\end{subequations}
Here we use
\begin{equation}
\label{eq:Defbetamdelta}
    \beta_m^\delta
    \,:=\,
    \max \bigl\{ \beta\geq 0 \;\big|\;
    \real ( F^\delta[q] - \beta F'[0]\chi_{P_m} ) \geq_{d_m} -\delta I \bigr\} \,,
\end{equation}
instead of the linear constraint $\beta_m$ from \eqref{eq:Defbetam}, where as before $d_m = d(\qmin,P_m)$ is the number of negative eigenvalues of $\real(F[q]-\qmin F'[0]\chi_{P_m})$ in case this number is finite;
if $d(\qmin,P_m)$ is not finite, we set $\beta_m^\delta=0$.

\begin{lemma}
\label{lmm:betamdelta}
    For all $m=1,\ldots, M$ we have that $\beta_m\leq\beta_m^\delta$.
\end{lemma}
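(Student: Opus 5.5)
We want to show $\beta_m \leq \beta_m^\delta$ for each $m$. The plan is to show that $\beta_m$ is itself an admissible value in the set defining $\beta_m^\delta$ in \eqref{eq:Defbetamdelta}; since $\beta_m^\delta$ is the maximum over that set, the inequality follows.

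First we dispose of the degenerate cases. If $d(\qmin,P_m)$ is not finite, then $\beta_m=0$ by definition, and $\beta_m^\delta = 0$ as well, so the claim holds trivially. Likewise, whenever $\beta_m=0$ there is nothing to show, because $\beta_m^\delta\geq 0$. We therefore assume that $d_m<\infty$ and set $\beta:=\beta_m$. By the definition \eqref{eq:Defbetam}, the operator $\real(F[q]-\beta F'[0]\chi_{P_m})$ has at most $d_m$ negative eigenvalues. Equivalently, there is a subspace $V\subs\LtSd$ with $\dim V\leq d_m$ such that
\begin{equation*}
\real\Bigl(\int_\Sd g\,\ol{(F[q]-\beta F'[0]\chi_{P_m})g}\ds\Bigr)\,\geq\,0 \qquad\text{for all } g\in V^\perp \,.
\end{equation*}
This equivalence is the usual min--max/spectral characterization for compact self-adjoint operators: take $V$ to be the span of the eigenvectors belonging to the negative eigenvalues.

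Next we perturb. For $g\in V^\perp$ we use \eqref{eq:EstNoiseLevel} and Cauchy--Schwarz to get
\begin{equation*}
\real\Bigl(\int_\Sd g\,\ol{(F^\delta[q]-\beta F'[0]\chi_{P_m})g}\ds\Bigr)
\,\geq\, -\bigl|\langle g,(F^\delta[q]-F[q])g\rangle\bigr| \,\geq\, -\delta\|g\|^2 \,.
\end{equation*}
So the self-adjoint operator $\real(F^\delta[q]-\beta F'[0]\chi_{P_m})+\delta I$ is nonnegative on the subspace $V^\perp$, whose codimension is at most $d_m$.

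Finally we translate this back into an eigenvalue count. The operator $\real(F^\delta[q]-\beta F'[0]\chi_{P_m})$ need not be compact, because $F^\delta[q]$ is only assumed to be bounded. We therefore read $A\geq_r B$ as ``$A-B$ is nonnegative on a subspace of codimension at most $r$''. For compact operators this is equivalent to the footnote definition by the min--max principle, and for general bounded self-adjoint operators it is the natural interpretation, namely that the spectral subspace of the negative part has dimension at most $r$. With this reading, the previous step shows that $\beta_m$ belongs to the set in \eqref{eq:Defbetamdelta}, and hence $\beta_m\leq\beta_m^\delta$.

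The main point needing care is this last step: making the ``$\geq_{d_m}$'' comparison meaningful for a possibly noncompact noisy operator, and justifying the equivalence between the eigenvalue-count formulation and the subspace formulation via Courant--Fischer. Two further routine checks remain. The maximum in \eqref{eq:Defbetam} is attained, because the admissible set there is closed: the eigenvalues depend continuously on $\beta$. And for the same reason it is legitimate to use $\beta=\beta_m$ itself rather than a sequence of values approaching it.
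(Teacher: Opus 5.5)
Your proposal is correct and follows the paper's argument. You bound the quadratic form of $\real(F^\delta[q]-F[q])$ below by $-\delta\|g\|^2$ and add it to the unperturbed inequality on the complement of a subspace of dimension at most $d_m$, which shows that $\beta_m$ satisfies the constraint defining $\beta_m^\delta$. The paper states this for every $\beta\in[0,\beta_m]$, so it does not need the maximum to be attained. It also uses $\geq_{d_m}$ loosely for an operator involving the noncompact $-\delta I$. Your codimension reading of $\geq_{d_m}$ and your closedness remark therefore just make explicit what the paper leaves implicit.
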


\begin{proof}
    Let $g\in\LtSd$.
    Using \eqref{eq:EstNoiseLevel} we find that
    \begin{equation*}
    \begin{split}
        \big| \big\langle g , \real(F^\delta[q] - F[q]) g \big\rangle_{\LtSd} \big|
        &\,\leq\, \|g\|_{\LtSd} \| \real(F^\delta[q] - F[q]) g \|_{\LtSd} \\
        &\,\leq\, \|g\|_{\LtSd}^2 \| F^\delta[q] - F[q] \|_{\Lcal(\LtSd)}
        \,\leq\, \delta \|g\|_{\LtSd}^2 \,.
    \end{split}
    \end{equation*}
Thus, $-\delta I \leq \real(F^\delta[q] - F[q]) \leq \delta I$ in quadratic sense, and for $m=1,\dots,M$ the definition of $\beta_m$ in \eqref{eq:Defbetam} and \eqref{eq:EstNoiseLevel} show that
\begin{equation}
\label{eq:ProofLmmbetamdelta}
    \real \bigl( F^\delta[q] - \beta F'[0]\chi_{P_m} \bigr)
    \,=\, \real \bigl( F[q] - \beta F'[0]\chi_{P_m} \bigr)
    + \real \bigl( F^\delta[q] - F[q] \bigr)
    \,\geq_{d_m}\, -\delta I
\end{equation}
for all $\beta\in[0,\beta_m]$.
Accordingly, \eqref{eq:Defbetamdelta} yields $\beta_m\leq \beta_m^\delta$.
\end{proof}

\begin{remark}
(a)\; The linear constraint $\beta_m^\delta$ still contains some information about the support of the scatterer.
Combining Lemma~\ref{lmm:betamdelta} with Lemma~\ref{lmm:Pmbetam} we obtain that $P_m\subs D$ implies $\beta_m^\delta \geq \qmin$, i.e.,
$P_m\not\subs D$ if $\beta_m^\delta < \qmin$.

(b)\; Using the same arguments as in the proof of Theorem~\ref{thm:MinimizerExactData} it follows that for any fixed $\delta>0$ the constrained optimization problem
\begin{equation}
    \label{eq:MinimizationProblemNoisyNoReg}
    \min_{h\in\Acal^\delta} \Bigl(
    \sum_{\{ j \;|\; \lambda_j > 0\}} \lambda_j(\bfR^\delta(h)) \Bigr) \,,
\end{equation}
    with $\Acal^\delta$ as defined in \eqref{eq:DefAdelta} has the unique minimizer
    \begin{equation*}
        \htilde^\delta
        \,=\, \sum_{m=1}^M \min\{\qmin,\beta_m^\delta\} \chi_{P_m} \,.
    \end{equation*}
    This means that for the degenerate case when $\alpha(\delta)=0$ the minimization problem \eqref{eq:MinimizationProblemNoisy} just recovers the largest element in the admissible set $\Acal^\delta$.

(c)\; Minimizing the Frobenius norm of the linearized residual $\bfR^\delta(h)$ as in \eqref{eq:MinimizationProblemNoisy} makes particular sense for small wave numbers $k$ (i.e., at low frequencies) or for small contrast functions $q$, when the Born approximation $\bfF_N'[0]q$ constitutes a good approximation of $\bfF_N[q]$ (see, e.g., \cite{Kir17}).
In this case we expect the regularized functional in \eqref{eq:MinimizationProblemNoisy} to yield significantly better reconstructions of the support of the scatterer $D$ when compared to minimizers of the unregularized functional~\eqref{eq:MinimizationProblemNoisyNoReg}.~\hfill$\lozenge$
\end{remark}

In the next theorem we establish that minimizers of \eqref{eq:MinimizationProblemNoisy} converge to the minimizer of \eqref{eq:MinimizationProblem} as the noise level $\delta$ tends to zero, provided the coupling parameter $\alpha = \alpha(\delta)$ is chosen appropriately with respect to the noise level.
This means that the regularized solutions converge to a piecewise constant function in $\Ycal$ that determines the support of the scatterer $D$ up to pixel partition as the noise level tends to zero.

\begin{theorem}
\label{thm:Stability}
Suppose that $\alpha:[0,\infty)\to [0,\infty)$ is an a priori parameter choice rule satisfying $\lim_{\delta\to 0} \alpha(\delta) = 0$.
Then, for any $\delta>0$, the minimization problem \eqref{eq:MinimizationProblemNoisy} attains a minimizer.
Furthermore, denoting by $\hhat=\sum_{m=1}^M \min\{\qmin,\beta_m\} \chi_{P_m}$ the unique minimizer of \eqref{eq:MinimizationProblem}, and by
$\hhat^\delta = \sum_{m=1}^M \ahat_m^\delta \chi_{P_m}$ any minimizer of \eqref{eq:MinimizationProblemNoisy}, we have that $\hhat^\delta \to \hhat$ in $\Ycal$ as $\delta\to 0$.
\end{theorem}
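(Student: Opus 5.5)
The plan has three parts: existence of a minimizer for fixed $\delta$, convergence of the constraints $\beta_m^\delta\to\beta_m$, and a compactness argument combined with the uniqueness statement of Theorem~\ref{thm:MinimizerExactData}.

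\emph{Existence.} Since $0\leq a_m\leq\min(\qmin,\beta_m^\delta)\leq\qmin$, the admissible set $\Acal^\delta$ is a closed box in $[0,\qmin]^M$ and hence compact. This holds even if $\beta_m^\delta$ is unbounded, because of the cut-off at $\qmin$. The map $h\mapsto\bfR^\delta(h)$ is affine in the coefficients $a_m$. The eigenvalues of a self-adjoint matrix depend continuously on its entries, so $\sum_{\lambda_j>0}\lambda_j=\sum_j\max(\lambda_j,0)$ is continuous, and the Frobenius norm is continuous. Thus the cost functional $J_\delta$ is continuous on a compact set and attains a minimizer.

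\emph{Convergence of the constraints.} Fix $m$. If $d(\qmin,P_m)=\infty$, then $\beta_m=\beta_m^\delta=0$ and there is nothing to show. Otherwise, Lemma~\ref{lmm:betamdelta} gives $\beta_m\leq\beta_m^\delta$, so it remains to prove $\limsup_{\delta\to0}\beta_m^\delta\leq\beta_m$. This is the step I expect to be the main obstacle.
\begin{itemize}
\item Suppose $\beta_m^{\delta_n}\geq c$ along a sequence $\delta_n\to0$.
\item Since $F'[0]\chi_{P_m}\geq0$, the set of admissible $\beta$ in \eqref{eq:Defbetamdelta} is an interval. Hence $\real(F^{\delta_n}[q]-cF'[0]\chi_{P_m})\geq_{d_m}-\delta_n I$.
\item By the estimate in the proof of Lemma~\ref{lmm:betamdelta}, this gives $A:=\real(F[q]-cF'[0]\chi_{P_m})\geq_{d_m}-2\delta_n I$ for all $n$.
\item Assume $A$ has more than $d_m$ negative eigenvalues. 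Because $A$ is compact and self-adjoint, its negative eigenvalues are isolated. So there is a $(d_m+1)$-dimensional subspace $W$ on which $\langle g,Ag\rangle\leq-\varepsilon\|g\|^2$ for some $\varepsilon>0$.
\item Once $2\delta_n<\varepsilon$, the operator $A+2\delta_n I$ is negative definite on $W$. By the min-max principle it then has at least $d_m+1$ negative eigenvalues, which is a contradiction.
\end{itemize}
Hence $A\geq_{d_m}0$, i.e.\ $\beta_m\geq c$. Consequently $\beta_m^\delta\to\beta_m$, and therefore $\min(\qmin,\beta_m^\delta)\to\min(\qmin,\beta_m)$.

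\emph{Convergence of minimizers.} Since $\beta_m\leq\beta_m^\delta$, we have $\Acal\subs\Acal^\delta$, and in particular $\hhat\in\Acal^\delta$. Hence $J_\delta(\hhat^\delta)\leq J_\delta(\hhat)$. We also have $\|\bfR^\delta(h)-\bfR(h)\|\leq\|\bfF^\delta_N[q]-\bfF_N[q]\|\leq\delta$ uniformly in $h$, because the $g_j$ are orthonormal and so the Galerkin matrix of $F^\delta[q]-F[q]$ has spectral norm at most $\delta$.
\begin{itemize}
\item By Weyl's inequality, the positive-eigenvalue sums of $\bfR^\delta(h)$ and $\bfR(h)$ differ by at most $N\delta$.
\item The term $\alpha(\delta)\|\bfR^\delta(h)\|_F$ is bounded on $[0,\qmin]^M$ and multiplied by $\alpha(\delta)\to0$.
\item Hence $J_\delta\to J_0$ uniformly on $[0,\qmin]^M$, where $J_0$ denotes the cost functional of \eqref{eq:MinimizationProblem}.
\end{itemize}
Now take any sequence $\delta_n\to0$. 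By compactness of $[0,\qmin]^M$, a subsequence satisfies $\hhat^{\delta_n}\to h^*$. Passing to the limit in $\ahat^{\delta_n}_m\leq\min(\qmin,\beta_m^{\delta_n})$ and using the previous step shows $h^*\in\Acal$. The uniform convergence gives $J_0(h^*)\leq J_0(\hhat)$, so $h^*=\hhat$ by the uniqueness in Theorem~\ref{thm:MinimizerExactData}. Since every subsequence has a further subsequence converging to $\hhat$, the whole family satisfies $\hhat^\delta\to\hhat$.
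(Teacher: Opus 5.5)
Your proof is correct and follows essentially the same route as the paper. Both arguments use compactness of the coefficient box, $\hhat\in\Acal^{\delta}$ via Lemma~\ref{lmm:betamdelta}, a limit in the relaxed monotonicity constraint to place the limit point in $\Acal$, and then uniqueness from Theorem~\ref{thm:MinimizerExactData} plus a subsequence argument. The differences are in detail only: you first prove the slightly stronger fact $\beta_m^\delta\to\beta_m$, with an explicit eigenvalue-counting justification of the limiting $\geq_{d_m}$ relation and uniform convergence of the cost via Weyl's inequality. The paper instead passes to the limit directly along the minimizer coefficients $\ahat_m^{\delta_n}$ and simply asserts that the $\geq_{d_m}$ relation survives the limit.
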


\begin{proof}
    The existence of at least one minimizer of \eqref{eq:MinimizationProblemNoisy} follows from the continuity of the cost functional
    \begin{equation*}
        \Ycal \ni h \mapsto \sum_{\{ j \;|\; \lambda_j > 0\}} \lambda_j(\bfR^\delta(h)) + \alpha(\delta) \|\bfR^\delta(h)\|_F
    \end{equation*}
    and the compactness of the admissible set $\Acal^\delta$.

    Now let $(\delta_n)_{n\in\N} \subs (0,\infty)$ be a sequence converging to zero, and denote by $\hhat^{\delta_n} = \sum_{m=1}^M \ahat_m^{\delta_n} \chi_{P_m}$ an associated sequence of minimizers.
    Since $0< \ahat_m^{\delta_n}< \qmin$ for all $m\in\{1,\ldots,M\}$ and $n\in\N$, the sequence $(\ahat^{\delta_n}_1,\ldots,\ahat^{\delta_n}_M)_{n\in\N} \subs \R^M$ is bounded, and thus has a convergent subsequence.
    With an abuse of notation, we also denote this subsequence by $(\ahat^{\delta_n}_1,\ldots,\ahat^{\delta_n}_M)_{n\in\N}$.
    Moreover, we denote its limit by $(a_1,\ldots,a_M)$, and observe that $0\leq a_m\leq \qmin$ for all $m\in\{1,\ldots,M\}$.

    From \eqref{eq:EstNoiseLevel} we see that $F^{\delta_n}[q]\to F[q]$ in $\Lcal(\LtSd)$ with respect to the operator norm as~${n\to\infty}$.
    Thus, for any $m\in\{1,\ldots,M\}$,
    \begin{equation*}
    F^{\delta_n}[q] - \ahat_m^{\delta_n} F'[0]\chi_{P_m}
    \,\to\, F[q] - a_m F'[0]\chi_{P_m}
    \qquad \text{as } n\to\infty
    \end{equation*}
    in $\Lcal(\LtSd)$, and using \eqref{eq:ProofLmmbetamdelta} we find that
    \begin{equation*}
        \real \bigl( F[q] - a_m F'[0]\chi_{P_m} \bigr)
        \,=\, \lim_{n\to\infty} \real \bigl( F^{\delta_n}[q] - \ahat_m^{\delta_n} F'[0]\chi_{P_m} \bigr)
        \,\geq_{d_m} \lim_{n\to\infty} (-\delta_n I)
        \,=\, 0
    \end{equation*}
    in quadratic sense.
    Therefore, recalling \eqref{eq:Defbetam} we obtain that $a_m\leq\beta_m$ for all $m\in\{1,\ldots,M\}$.
    In particular, we have shown that $h:=\sum_{m=1}^M a_m \chi_{P_m}$ belongs to the admissible set $\Acal$ of \eqref{eq:MinimizationProblem}.

    Since $\min\{\qmin,\beta_m\} \leq \min\{\qmin,\beta_m^{\delta_n}\}$ for all $m\in\{1,\ldots,M\}$ and $n\in\N$ by Lemma~\ref{lmm:betamdelta}, we find that $\hhat \in \Acal^{\delta_n}$ for all $n\in\N$.
    Accordingly,
    \begin{equation}
    \label{eq:ProofThmStability1}
        \sum_{\{ j \;|\; \lambda_j > 0\}} \lambda_j(\bfR^{\delta_n}(h^{\delta_n}))
        + \alpha(\delta_n) \|\bfR^{\delta_n}(h^{\delta_n})\|_F
        \,\leq\, \sum_{\{ j \;|\; \lambda_j > 0\}} \lambda_j(\bfR^{\delta_n}(\hhat))
        + \alpha(\delta_n) \|\bfR^{\delta_n}(\hhat)\|_F
    \end{equation}
    for all $n\in\N$ by the optimality property of $h^{\delta_n}$.
    Moreover, it follows immediately from our assumptions on $\alpha=\alpha(\delta_n)$ that
    \begin{equation*}
        \sum_{\{ j \;|\; \lambda_j > 0\}} \lambda_j(\bfR^{\delta_n}(h^{\delta_n}))
        + \alpha(\delta_n) \|\bfR^{\delta_n}(h^{\delta_n})\|_F
        \,\to\, \sum_{\{ j \;|\; \lambda_j > 0\}} \lambda_j(\bfR(h))
        \qquad \text{as } n\to\infty \,,
    \end{equation*}
    and
    \begin{equation*}
        \sum_{\{ j \;|\; \lambda_j > 0\}} \lambda_j(\bfR^{\delta_n}(\hhat))
        + \alpha(\delta_n) \|\bfR^{\delta_n}(\hhat)\|_F
        \,\to\, \sum_{\{ j \;|\; \lambda_j > 0\}} \lambda_j(\bfR(\hhat))
        \qquad \text{as } n\to\infty \,.
    \end{equation*}
    Therefore, \eqref{eq:ProofThmStability1} yields
    \begin{equation*}
        \sum_{\{ j \;|\; \lambda_j > 0\}} \lambda_j(\bfR(h))
        \,\leq\, \sum_{\{ j \;|\; \lambda_j > 0\}} \lambda_j(\bfR(\hhat)) \,.
    \end{equation*}
    Recalling that $\hhat$ is the unique minimizer of \eqref{eq:MinimizationProblem} and that $h\in\Acal$, we obtain that $h=\hhat$.

    Since this is true for any convergent subsequence of the original sequence $(\ahat^{\delta_n}_1,\ldots,\ahat^{\delta_n}_M)_{n\in\N}$, we have shown that $\hhat^{\delta_n} \to h$ as $n\to\infty$ also for the original sequence $(\delta_n)_{n\in\N}$, which completes the proof.
\end{proof}

\section{Numerical examples}
\label{sec:Numerics}
We illustrate our theoretical findings by some numerical examples in $\Rtwo$.
To simulate a finite number of far field observations, which are then used as input data for the reconstruction algorithm,  we numerically approximate far field data $\uinfty(\xhat_l;\theta_m)$ corresponding to plane wave incident fields
\begin{equation*}
\ui(x;\theta_m)
\,:=\, \rme^{\rmi k x\cdot\theta_m} \,, \qquad x\in\Rtwo \,,
\end{equation*}
for $N$ equidistant observation and incident directions
\begin{equation*}
    \xhat_l,\theta_m
    \in \{ (\cos\phi_n,\sin\phi_n)\in\Sone \;|\;
    \phi_n = (n-1)2\pi/N \,,\; n=1,\ldots,N \} \,,
\end{equation*}
$1\leq l,m\leq N$.
If the support $D$ of the scatterer is contained in a ball $\BR$ of radius $R>0$ around the origin, then choosing $N\gtrsim 2kR$ is sufficient to fully resolve the relevant information contained in these far field patterns (see, e.g., \cite{GriSyl17}).
Accordingly, the matrix
\begin{equation}
\label{eq:DefF_N(q)}
    \bfF_N[q]
    \,=\, \frac{2\pi}{N} \bigl[ \uinfty(\xhat_l;\theta_m) \bigr]
    \in \C^{N\times N}
\end{equation}
is unitary equivalent to the matrix $\bfF_N[q]$ from \eqref{eq:DefFNq} for the orthonormal system $\{g_1,g_2,\ldots,g_N\}\subs L^2(S^1)$ with $g_n = \rme^{\rmi (n-N/2)\arg(\cdot)} / \sqrt{2\pi}$ for $n=1,\ldots,N$.

To simulate noisy far field observations
$\bfF_N^\delta[q]$ as in \eqref{eq:FarfieldObservations_bfFNdelta} satisfying
\eqref{eq:EstNoiseLevel} for some noise level $\delta>0$, we generate a complex random matrix $\bfE\in\C^{N\times N}$ with uniformly distributed real and imaginary parts and evaluate
\begin{equation}
    \label{eq:DefF_N^delta(q)}
    \bfF_N^\delta[q]
    \,=\, \bfF_N[q] + \delta \frac{\bfE}{\|\bfE\|_2} \,.
\end{equation}

We consider an equidistant grid on the quadratic region of interest
\begin{equation}
\label{eq:DiscretizationOmega}
    \Omega
    \,=\, \Bigl[ -\frac{R}{\sqrt{2}} , \frac{R}{\sqrt{2}} \Bigr]^2
    \,=\, \bigcup_{m=1}^M P_m \,,
\end{equation}
with quadratic pixels $P_m = z_m + [-\frac{\ell}{2},\frac{\ell}{2}]^2$\,, $1\leq m\leq M$, where $z_m\in\Omega$ denotes the center of~$P_m$ and~$\ell>0$ its side length.
A short calculation shows that for each pixel $P_m$ the matrix ${\bfS_m := \bfF'_N[0]\chi_{P_m}}$ from \eqref{eq:DefFN'0} is given by
\begin{equation}
\label{eq:DefSm}
    \bfS_m
    \,=\, \frac{2\pi}{N}
    \Bigl[ (k\ell)^2 \rme^{\rmi k z_m\cdot(\theta_m-\theta_l)} \sinc\Bigl(\frac{k\ell}{2}(\theta_m-\theta_l)_1\Bigr) \sinc\Bigl(\frac{k\ell}{2}(\theta_m-\theta_l)_2\Bigr)
    \Bigr]_{1\leq l,m\leq N} \in \C^{N\times N} \,.
\end{equation}
Accordingly, we can rewrite the linearized residual from \eqref{eq:DefRdelta} as
\begin{equation*}
    \bfR^\delta(h)
    \,=\, \bfV^\delta - \sum_{m=1}^M a_m \bfS_m \,,
\end{equation*}
where we use the shortcut $\bfV^\delta := \real\bigl( \bfF_N^\delta[q] \bigr)$ and as before $h=\sum_{m=1}^M a_m\chi_{P_m}$.

Next we detail our approach to determine the linear constraint $\beta_m^\delta$ from~\eqref{eq:Defbetamdelta}, which satisfies
\begin{equation}
\label{eq:Approximation_betamdelta}
    \beta_m^\delta
    \,\approx\, \max \bigl\{ \beta\geq 0 \;\big|\;
    \bfV^\delta - \beta \bfS_m \geq_{d_m} -\delta I \bigr\} \,.
\end{equation}
We approximate the number $d_m = d(\qmin,P_m)$ of negative eigenvalues of ${\real(F[q]-\qmin F'[0]\chi_{P_m})}$ by the number $\dtilde_m$ of negative eigenvalues of
\begin{equation*}
    \bfV^\delta - \qmin\bfS_m + \delta \bfI \,\in\, \C^{N\times N} \,,
\end{equation*}
where $\bfI\in\C^{N\times N}$ denotes the identity matrix.
This is done by evaluating the eigenvalues of this self-adjoint matrix for each $m\in\{1,\ldots,M\}$ numerically.

Then, we consider the generalized eigenvalue problem for the matrix pencil
\begin{equation*}
    \bfS_m-\lambda(\bfV^\delta+\delta \bfI) \,, \qquad \lambda\in\C \,.
\end{equation*}
Since $\bfS_m$ is invertible by Lemma~\ref{lmm:bfS_mPosDef},
\begin{equation*}
    \det(\bfS_m-\lambda(\bfV^\delta+\delta\bfI)) \,=\, 0
    \qquad \text{if and only if} \qquad
    \det(I-\lambda\bfS_m^{-1}(\bfV^\delta+\delta\bfI)) \,=\, 0 \,.
\end{equation*}
Accordingly, recalling that both $\bfS_m$ and $\bfV^\delta+\delta\bfI$ are self-adjoint, the generalized eigenvalues of $\bfS_m-\lambda(\bfV^\delta+\delta\bfI)$ are real (including possibly $\pm\infty$), and there exists a corresponding orthonormal basis of $\C^N$ consisting of eigenvectors of $\bfS_m-\lambda\bfV^\delta$.
More precisely, $\lambda\in(\R\setminus\{0\})\cup\{\pm\infty\}$ is a generalized eigenvalue of $\bfS_m-\lambda(\bfV^\delta+\delta\bfI)$ if and only if $1/\lambda\in\R$ is an eigenvalue of the self-adjoint matrix $\bfS_m^{-1}(\bfV^\delta+\delta\bfI)$.

We denote these generalized eigenvalues by $\lambda_1\geq\lambda_2\geq\cdots\geq\lambda_N$, and an associated orthonormal basis of eigenvectors by $\{x_1,\ldots,x_N\}$.
These can for instance be stably evaluated using the QZ-algorithm; see \cite[Sec.~7.7]{GolVan13}.
Then,
\begin{equation*}
    (\bfV^\delta +\delta\bfI)x_j
    \,=\, \frac{1}{\lambda_j} \bfS_m x_j \,, \qquad j=1,\ldots,N \,.
\end{equation*}
Choosing $\beta_m^* := 1/\lambda_1$, and denoting by $V_*$ the span of all eigenvectors corresponding to non-positive generalized eigenvalues, we obtain that
\begin{equation*}
    x^* (\bfV^\delta+\delta\bfI) x
    \,\geq\, \beta_m^* x^* \bfS_m x \qquad \text{for all } x\in V_*^\perp \,.
\end{equation*}
If $\dim(V_*)=d_*$, we use $\beta_m^*$ to approximate $\beta_m^\delta$.
On the other hand, if $r := d_*-\dim(V^*) > 0$, then we choose $\widetilde{\beta_m^*} := 1/\lambda_{r+1}$ instead.

As has been observed in \cite{EbeHarWan25}, the sum of all positive eigenvalues of the self-adjoint matrix $\bfR^\delta(h)$ in \eqref{eq:MinimizationProblemNoisy} can be written as a semidefinite program,
\begin{equation*}
    \sum_{\{j\;|\;\lambda_j>0\}} \lambda_j\bigl( \bfR^\delta(h) \bigr)
    \,=\, \min_{\substack{\bfX\geq 0\\ \bfX-\bfR^\delta(h)\geq 0}} \trace(\bfX) \,.
\end{equation*}
Hence, the minimization problem \eqref{eq:MinimizationProblemNoisy} is equivalent to
\begin{equation}
\label{eq:RegTraceMinimization}
    \min_{\substack{h\in\Acal^\delta\\\bfX\geq 0\\\bfX-\bfR^\delta(h)\geq 0}}
    \bigl( \trace(\bfX) + \alpha(\delta) \| \bfR^\delta(h) \|_F \bigr) \,.
\end{equation}
To solve this convex minimization problem (see, \cite[Rem.~6]{EbeHarWan25}) we used CVX, a package for specifying and solving convex programs \cite{CVX1,CVX2}.

\begin{example}
\label{exa:1}
In our numerical examples we consider two scatterers, a kite and an ellipse, with constant contrast function $q=1$ (kite) and $q=2$ (ellipse) as shown in Figures~\ref{fig:Exa_k0.5_delta0.01}--\ref{fig:Exa_k1.0_delta0.05} (dashed lines).
We simulate the far field matrix $\bfF_N[q]\in\C^{32\times 32}$ from \eqref{eq:DefF_N(q)} for $32$ incident and observation directions using a Nystr\"om method for a boundary integral formulation of the scattering problem for two wave numbers $k=0.5$ and $k=1$.
Therewith, we compute noisy data $\bfF_N^\delta[q]$ for $\delta=0.01$ and $\delta=0.05$ as described in \eqref{eq:DefF_N^delta(q)}, and accordingly $\bfV^\delta = (\bfF_N^\delta[q]+\bfF_N^\delta[q]^*)/2$.

We consider $\Omega = [-5,5]\times [-5,5]$ for the quadratic region of interest and divide it into~${M = 32^2}$ Pixels $P_m$, $m=1,\ldots,M$, of side length $\ell\approx0.31$ as described in~\eqref{eq:DiscretizationOmega}.
For each Pixel the self-adjoint matrix $\bfS_m$ can be evaluated using \eqref{eq:DefSm}.
In the numerical implementation of the monotonicity-based regularization~\eqref{eq:RegTraceMinimization} we use $\alpha(\delta) = \delta$ for the regularization parameter in~\eqref{eq:RegTraceMinimization}.
The linear constraints~$\beta_m^\delta$, $m=1,\ldots, M$, in the definition of the admissible set $\Acal_\delta$ from~\eqref{eq:MinimizationProblemNoisy} are approximated by~$\beta_m^*$, which are evaluated by solving generalized eigenvalue problems.
Furthermore, we put $\qmin=1$ in \eqref{eq:MinimizationProblemNoisy}.

\begin{figure}[th]
    \centering
    \includegraphics[height=3.7cm]{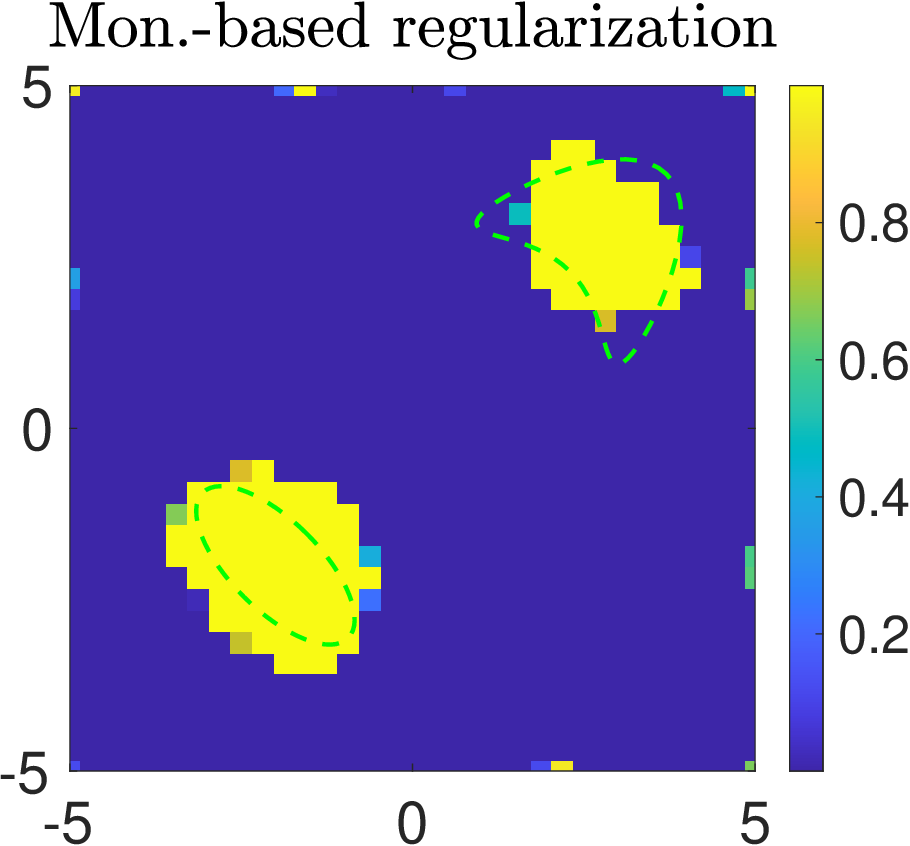}
    \includegraphics[height=3.7cm]{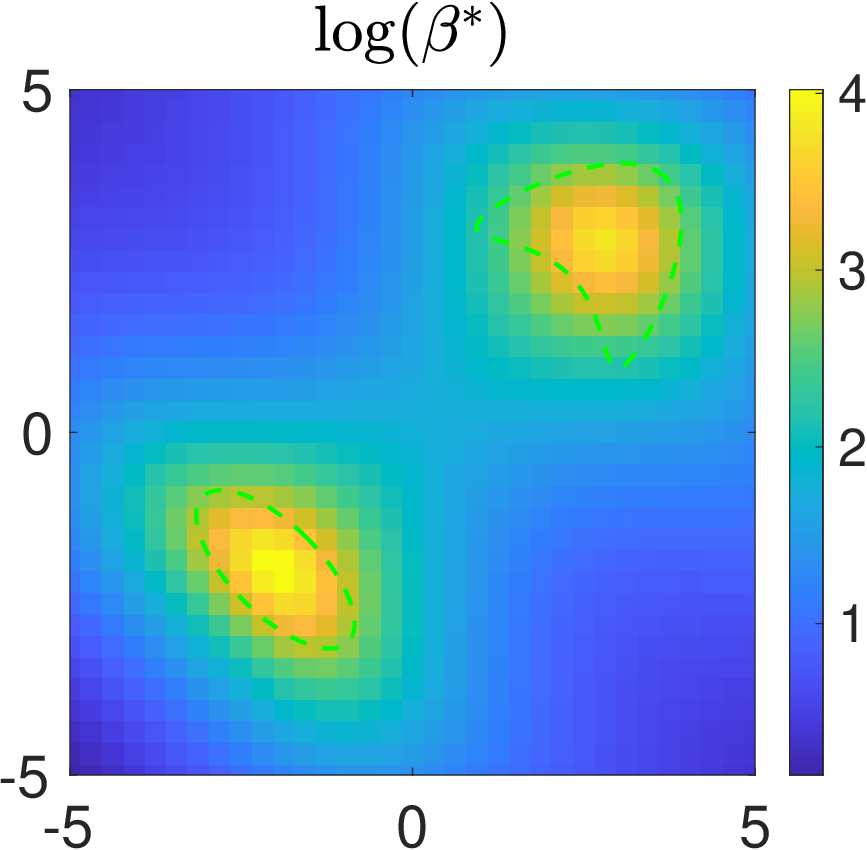}
    \includegraphics[height=3.7cm]{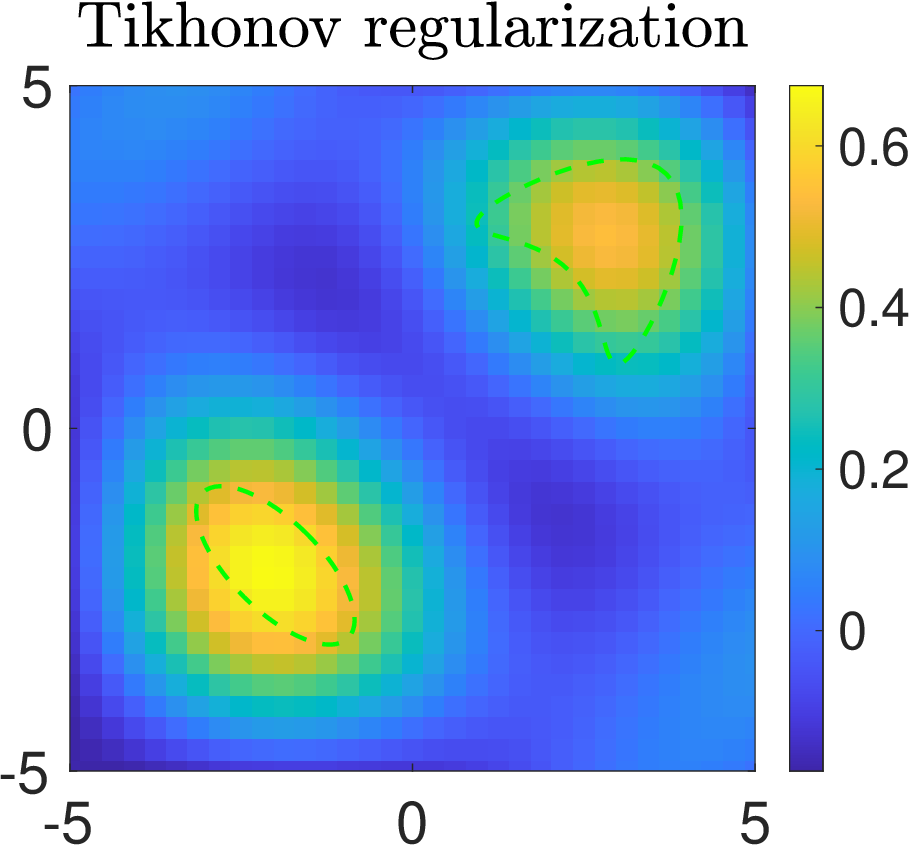}
    \includegraphics[height=3.7cm]{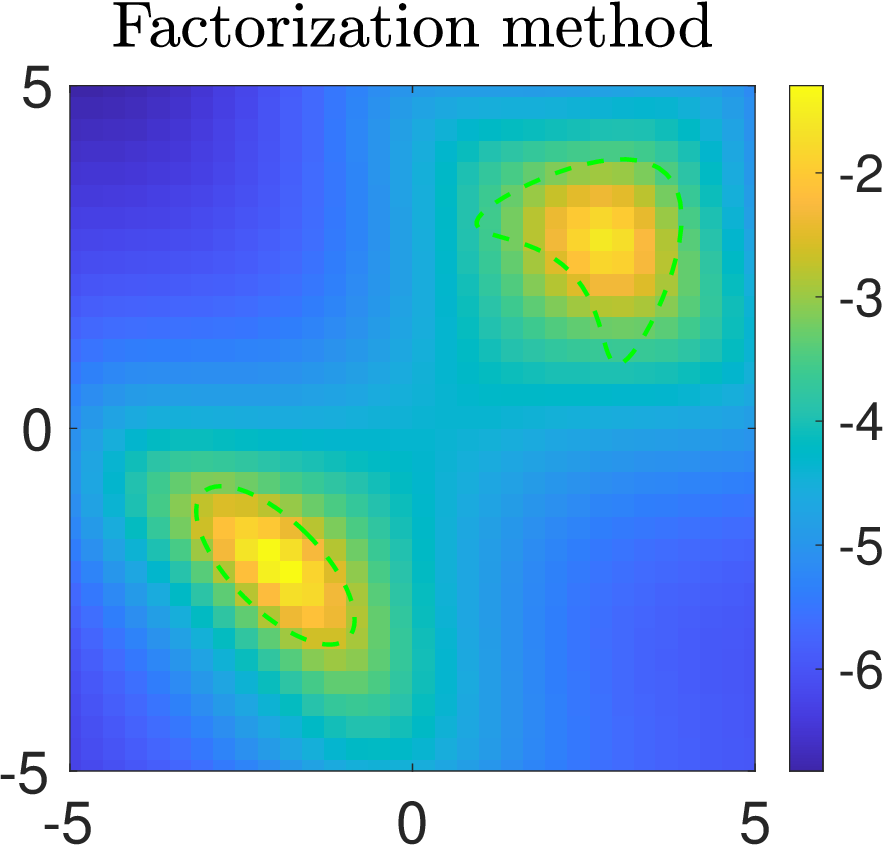}
    \caption{\small Reconstructions for wave number $k=0.5$ and noise level $\delta=0.01$.}
    \label{fig:Exa_k0.5_delta0.01}
\end{figure}

\begin{figure}[th]
    \centering
    \includegraphics[height=3.7cm]{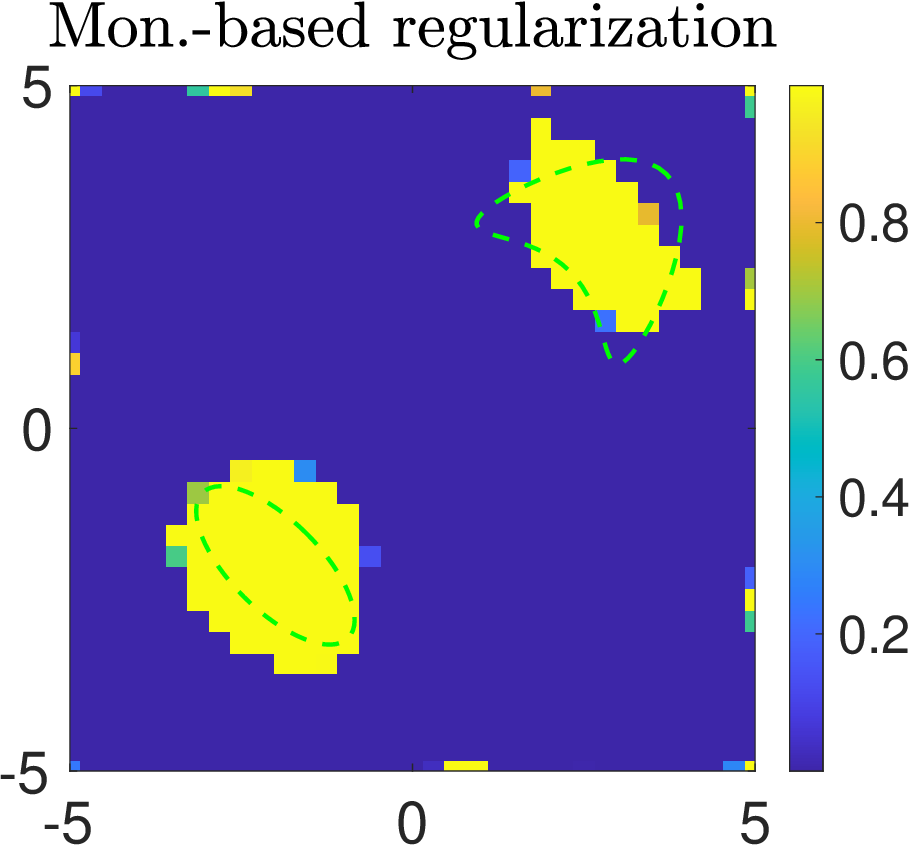}
    \includegraphics[height=3.7cm]{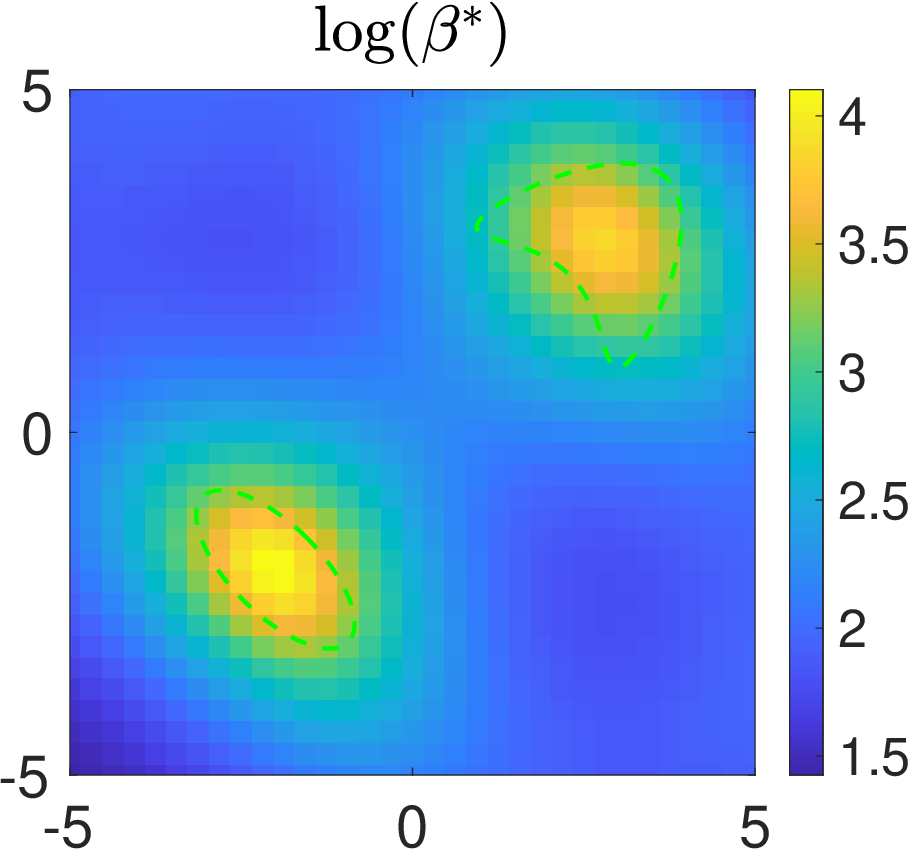}
    \includegraphics[height=3.7cm]{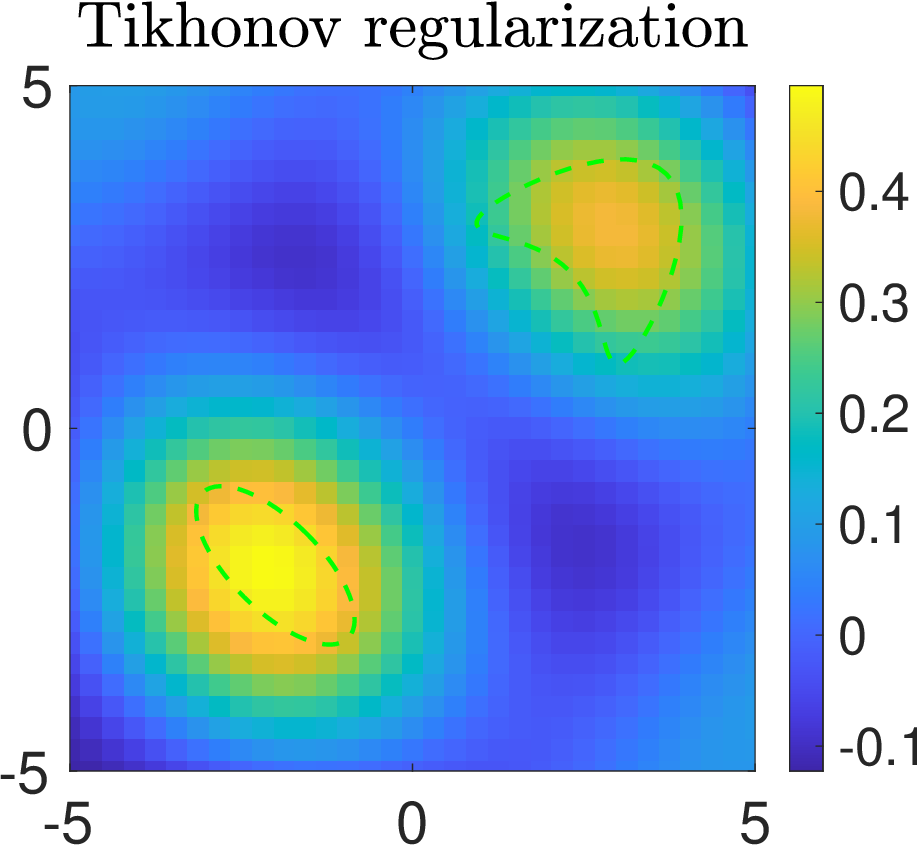}
    \includegraphics[height=3.7cm]{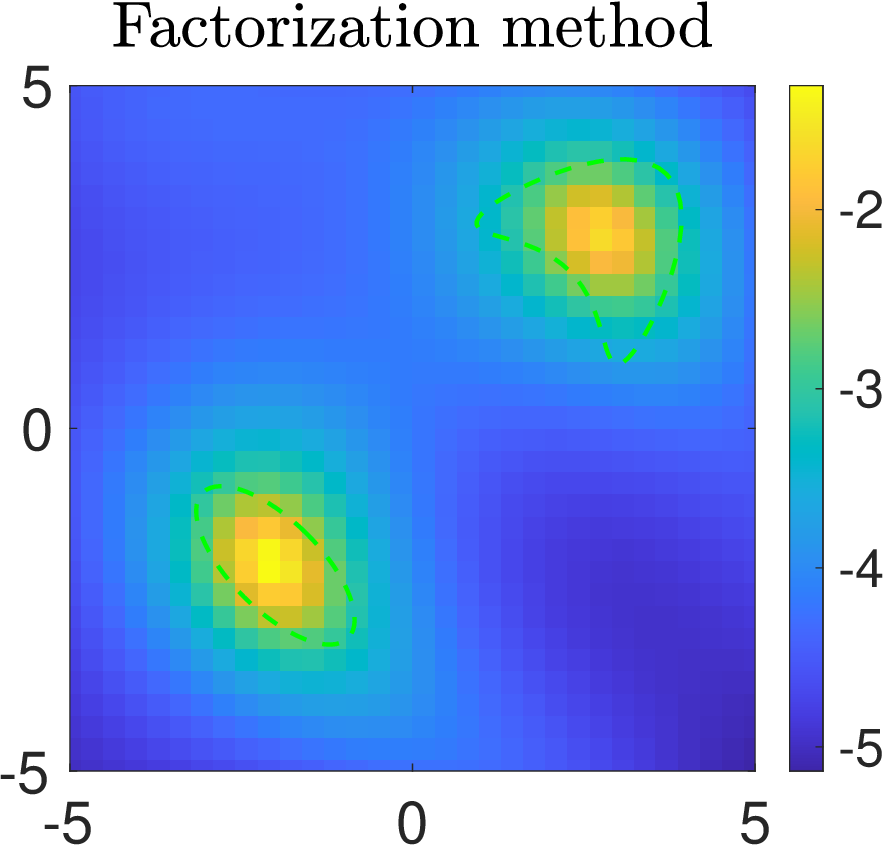}
    \caption{\small Reconstructions for wave number $k=0.5$ and noise level $\delta=0.05$.}
    \label{fig:Exa_k0.5_delta0.05}
 \end{figure}

\begin{figure}[th]
    \centering
    \includegraphics[height=3.7cm]{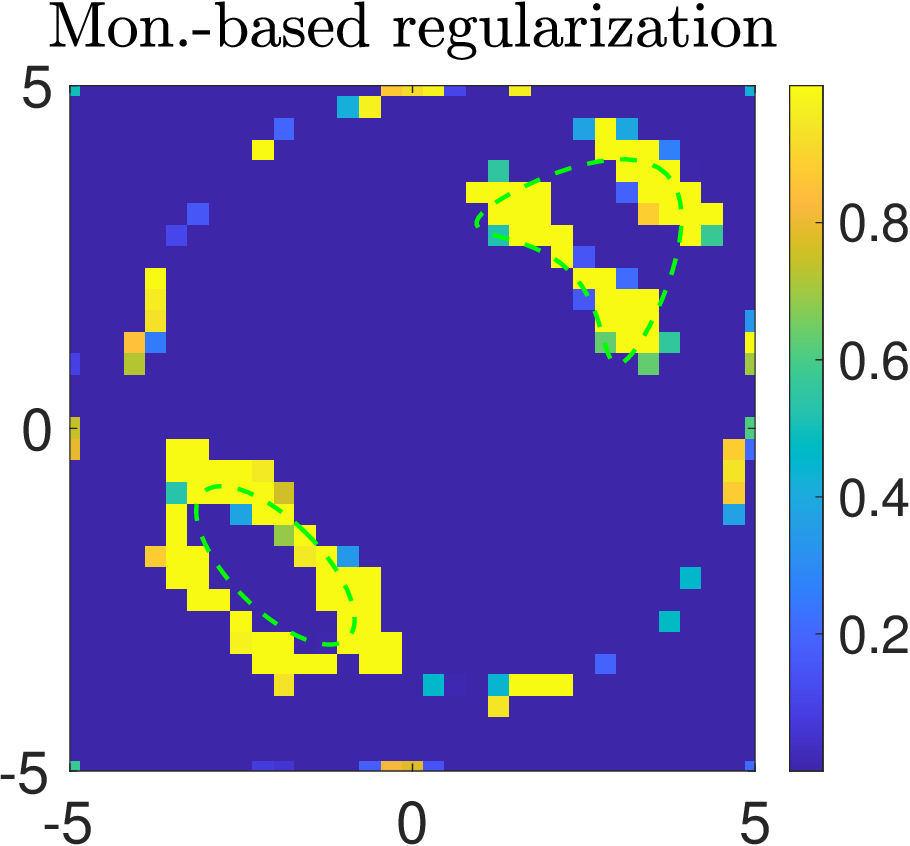}
    \includegraphics[height=3.7cm]{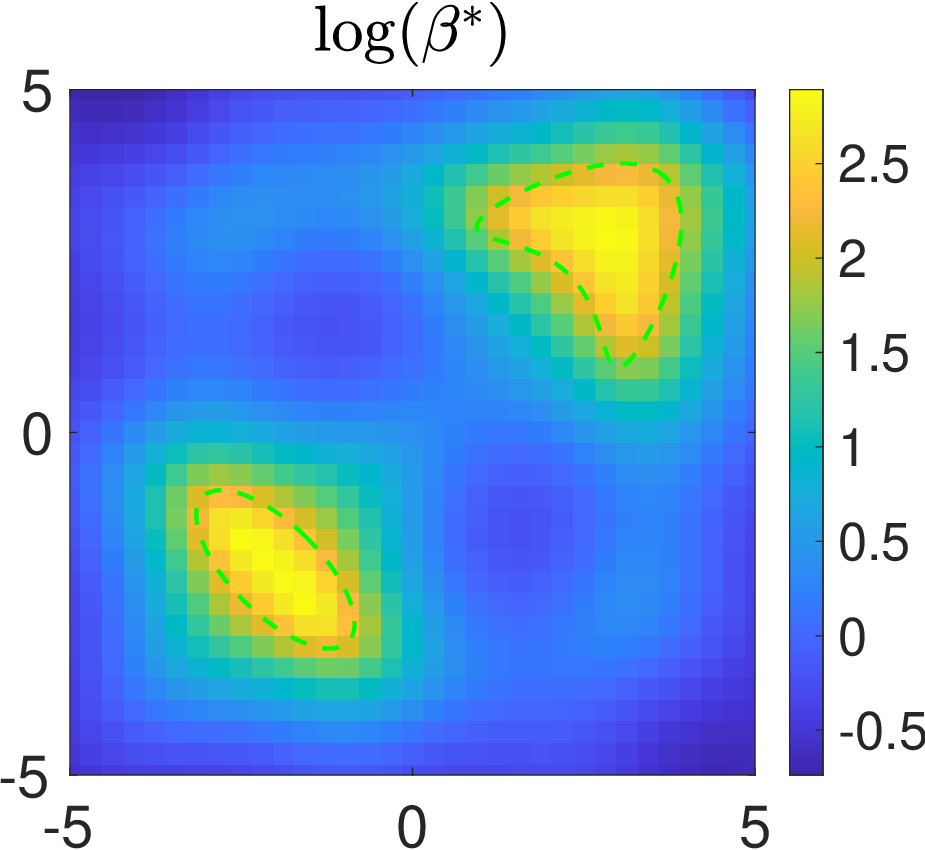}
    \includegraphics[height=3.7cm]{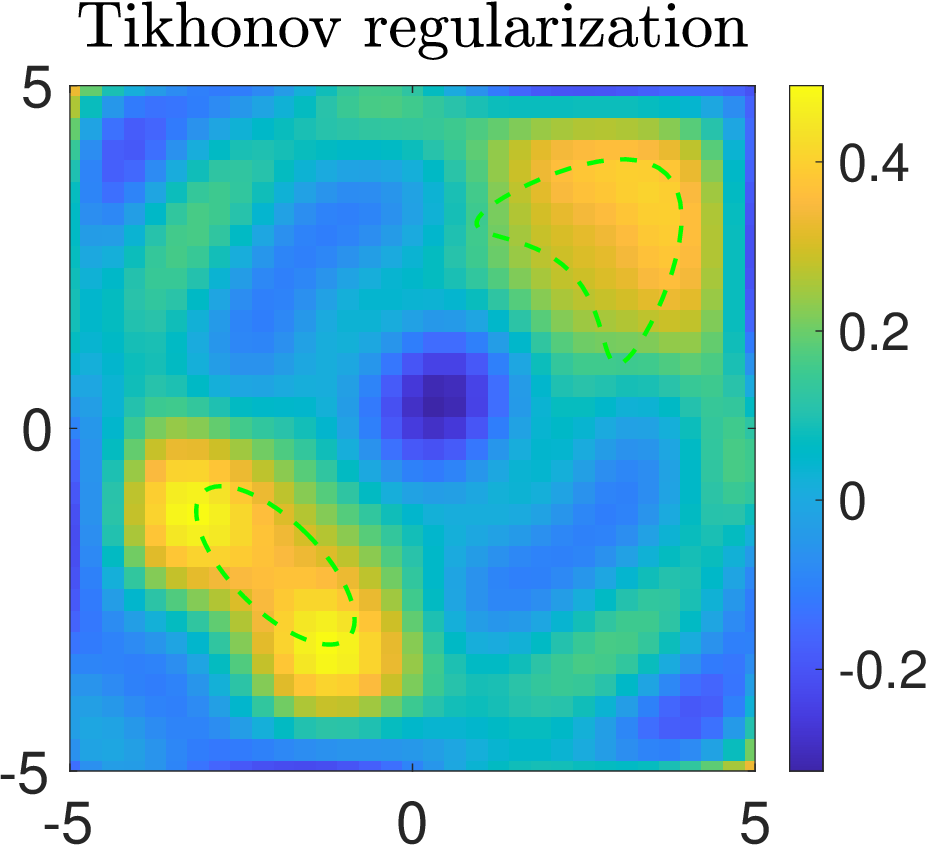}
    \includegraphics[height=3.7cm]{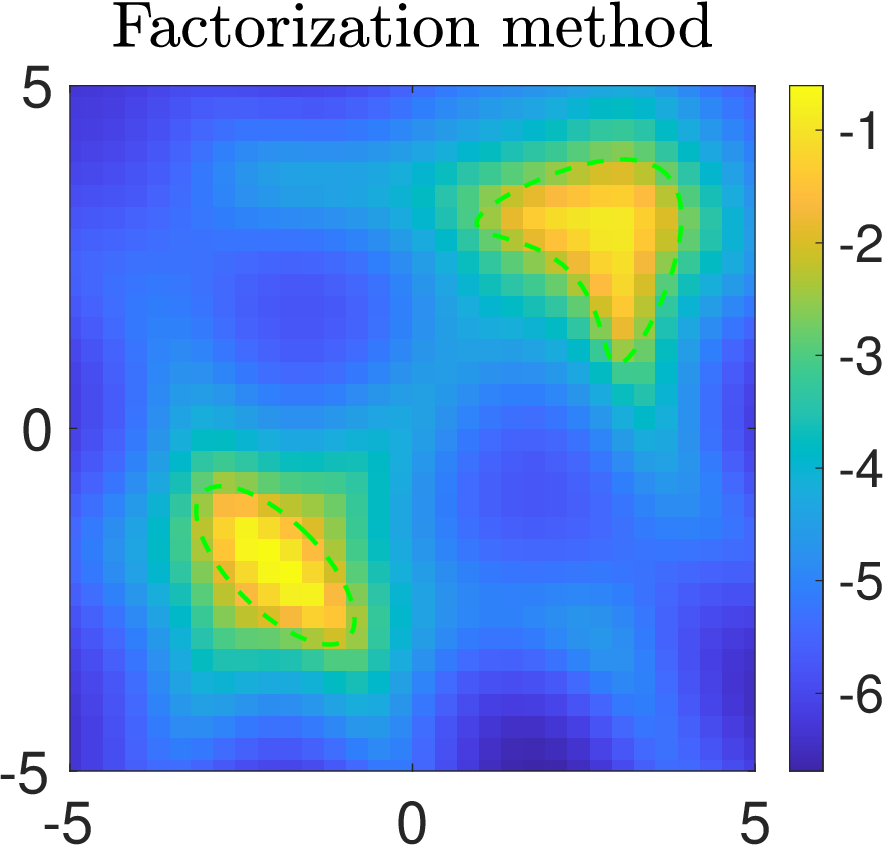}
    \caption{\small Reconstructions for wave number $k=0.5$ and noise level $\delta=0.01$.}
    \label{fig:Exa_k1.0_delta0.01}
\end{figure}

\begin{figure}[th]
    \centering
    \includegraphics[height=3.7cm]{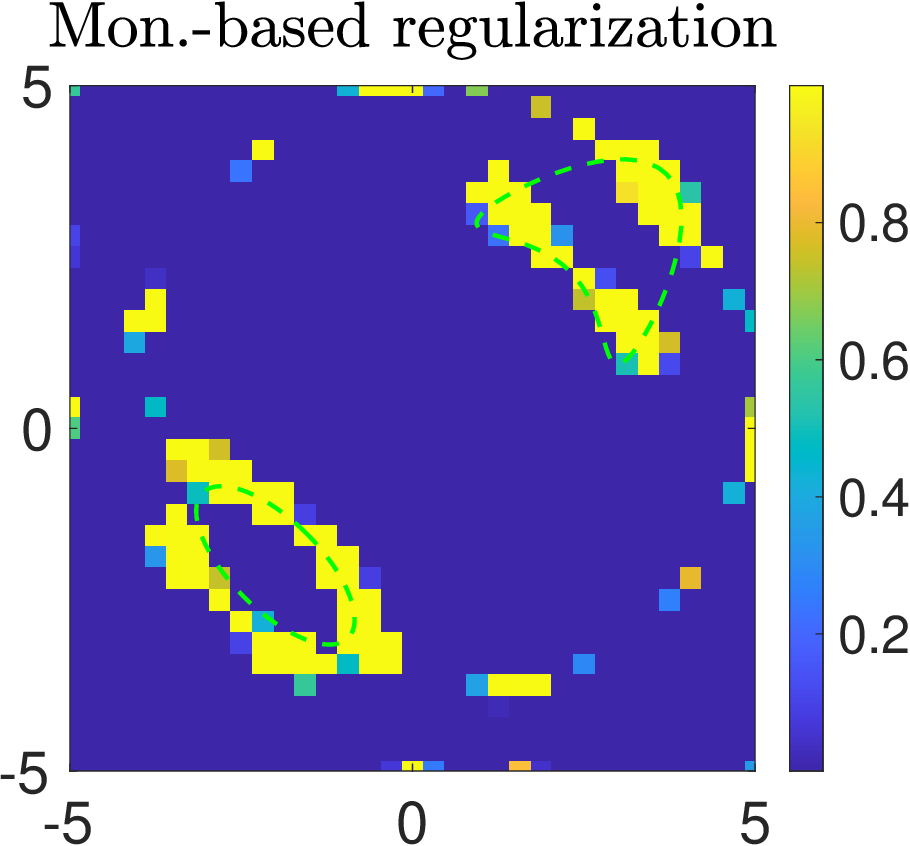}
    \includegraphics[height=3.7cm]{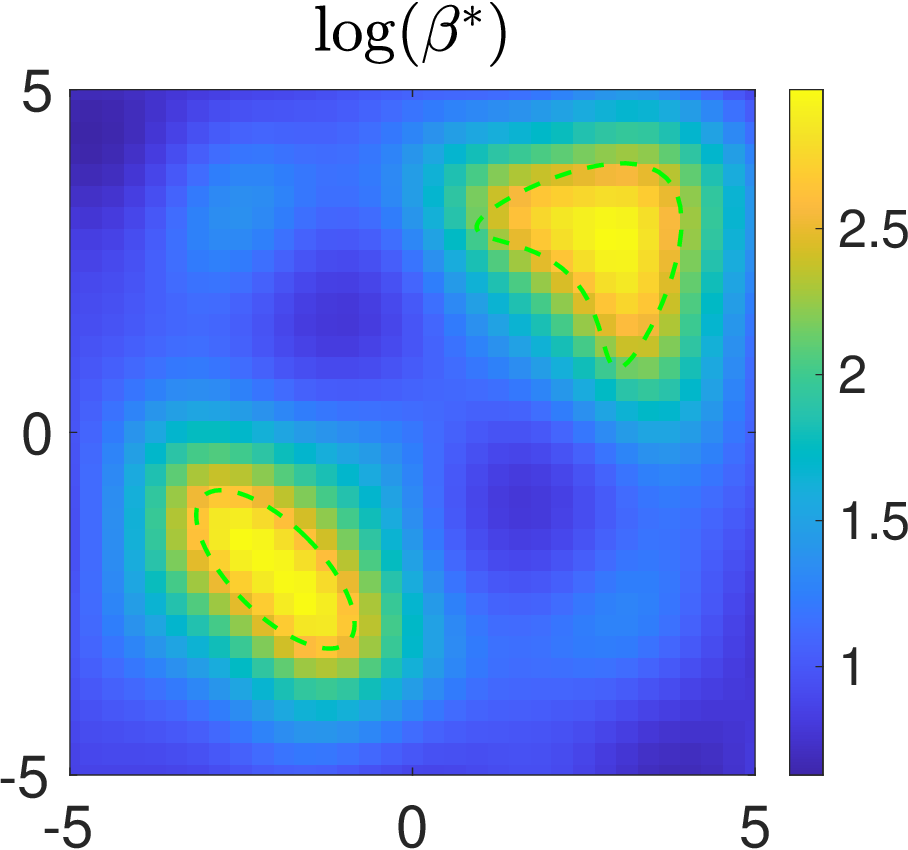}
    \includegraphics[height=3.7cm]{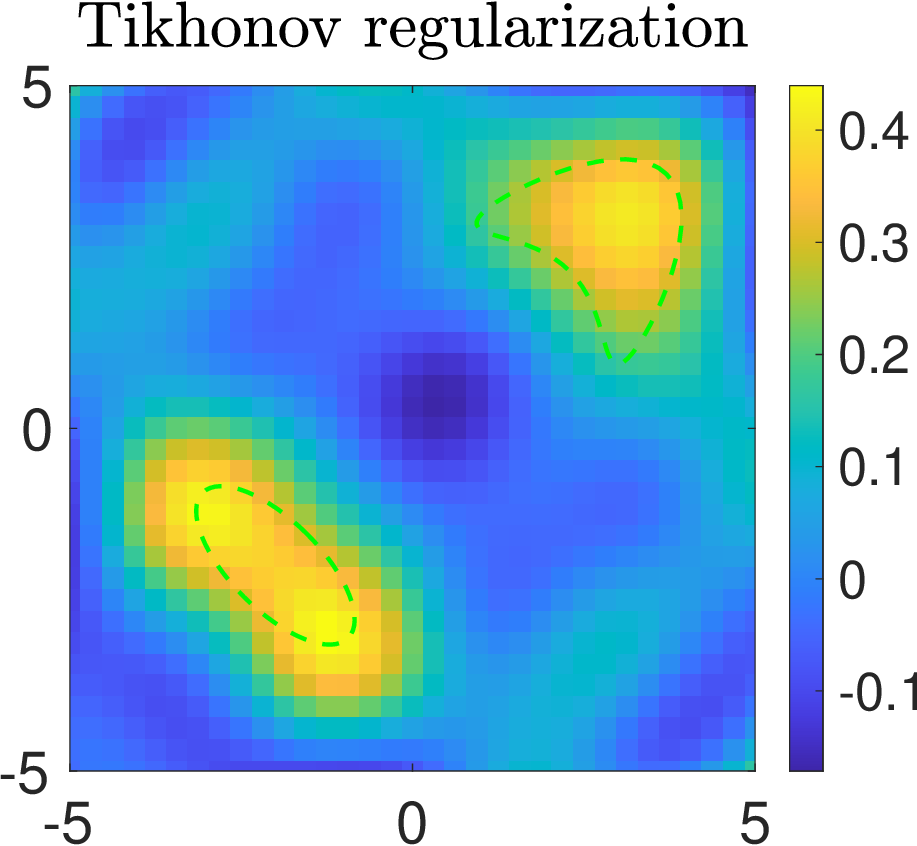}
    \includegraphics[height=3.7cm]{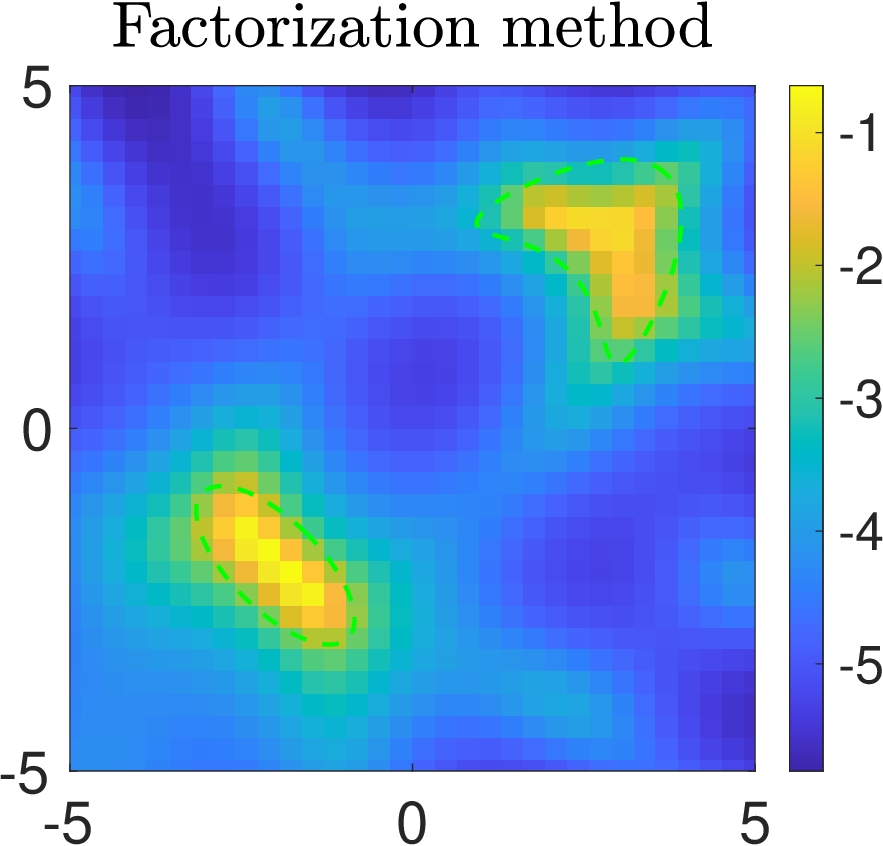}
    \caption{\small Reconstructions for wave number $k=0.5$ and noise level $\delta=0.05$.}
    \label{fig:Exa_k1.0_delta0.05}
\end{figure}

In Figures~\ref{fig:Exa_k0.5_delta0.01}--\ref{fig:Exa_k1.0_delta0.05} (right) we present numerical results for the two different wave numbers~(${k=0.5}$ and $k=1$) and for the two different noise levels~($\delta=0.01$ and $\delta=0.05$).
The first columns in these plots display the reconstructions obtained by the novel monotonicity-based regularization method.
In the second column we show plots of the linear constraints $\beta^*$ that are obtained from the monotonicity test \eqref{eq:Approximation_betamdelta} by solving generalized eigenvalue problems.
To assess the performance of the new method compared to a plain linearization approach, we include reconstructions obtained by a simple Tikhonov regularization of the linearized problem in the third column of these plots.
To this end, we minimize
\begin{equation*}
   \min_{h\in \Bcal} \bigl( \|\bfR^\delta(h)\|_F^2 + \delta \|h\|_{L^2(\Omega)}^2 \bigr) \,,
\end{equation*}
where the admissible set $\Bcal$ is given by
\begin{equation*}
    \Bcal
    \,:=\, \Bigl\{
    h = \sum_{m=1}^M a_m \chi_{P_m} \;\Big|\;
    a_m \in \R
    \Bigr\} \,.
\end{equation*}
Finally, we also include reconstructions obtained by the factorization method (see, e.g., \cite[Sec.~4]{KirGri08} or \cite[Sec.~7.5]{Kir21}) in the last column.
The corresponding plots show the logarithm of the reciprocal of the usual Picard sum.

At the wave number $k=0.5$ the monotonicity-based regularization gives good results for both noise levels as shown in Figures~\ref{fig:Exa_k0.5_delta0.01}--\ref{fig:Exa_k0.5_delta0.05}.
The plots of the linear constraint $\beta^*$ look very similar to the reconstructions obtained by the factorization method, which is not completely surprising since the theoretical foundations of the monotonicity test and of the factorization method are closely related.
The results obtained by Tikhonov regularization of the linearized problem are also good, which suggests that the linearization error is reasonably small.
This also partially explains why the regularized (by the sum of all positive eigenvalues of the linearized residual) minimization of the Frobenius norm of the linearized residual $\bfR^\delta(h)$ in \eqref{eq:RegTraceMinimization} works well.

At the larger wave number $k=1$ the linearization error increases,
accordingly also the results obtained by the monotonicity-based regularization and also by Tikhonov regularization of the linearized problem in Figures~\ref{fig:Exa_k1.0_delta0.01}--\ref{fig:Exa_k1.0_delta0.05} are not as good as for $k=0.5$.
However, the linear constraint $\beta^*$ still gives at least comparable results as the factorization method.
These results suggest that the monotonicity-based regularization method developed in this work is a good alternative to existing methods in the low frequency regime.
Moreover, implementing the monotonicity-based reconstruction schemes from \cite{GriHar18} using generalized eigenvalue problems as done here for the computation of the linear constraint $\beta^*$ seems to stabilize the method considerably.
\end{example}

\section*{Conclusions}
Based on \cite{EbeHarWan25}, we have proposed a new qualitative optimization scheme for shape reconstruction for inverse medium scattering problems.
This monotonicity-based regularization method combines the sound theoretical foundation of the monotonicity method with improved stability properties of a one-step linearization.
It works particularly well at low frequencies or for small contrasts in the refractive index, where the linearization error is small.
The method can be implemented efficiently using semidefinite programming and does not require solving any forward problems during reconstruction.
As a byproduct, we have developed a novel numerical implementation of the monotonicity test from \cite{GriHar18} by means of generalized eigenvalue problems, which has shown to give good numerical reconstructions also in the presence of noise.

\section*{Acknowledgments}
The research of R.~Griesmaier is supported by the Deutsche Forschungsgemeinschaft (DFG, German Research Foundation) - Project-ID 258734477 - SFB 1173.
The research of J.~Xiang is supported by the Natural Science Foundation of China (12301542) and the Open Research Fund of Hubei Key Laboratory of Mathematical Sciences (Central China Normal University, MPL2025ORG017).

{\small
  \bibliographystyle{abbrvurl}
  \bibliography{monotonicity_regularization}
}

\end{document}